\newcommand{\lvt}{\left|\kern-1.35pt\left|\kern-1.3pt\left|}
\newcommand{\rvt}{\right|\kern-1.3pt\right|\kern-1.35pt\right|}
\newtheorem{thm}{Theorem}[section]
\theoremstyle{remark}
 \def\d{\mathrm{d}}
 \def\l{{\lambda}}
\def\lla{\langle{\kern-2.5pt}\langle} 
\def\rra{\rangle{\kern-2.5pt}\rangle}
\begin{document}
 
\title{
Banded matrices and their orthogonality}

\author[Branquinho]{Amílcar Branquinho}
\address{CMUC, Department of Mathematics, University of Coimbra, 3000-143 Coimbra, Portugal}
\email{ajplb@mat.uc.pt}
\thanks{The first author thanks
Center for Mathematics of the University of Coim\-bra UIDB/00324/2020 (funded by the Portuguese Government through FCT/MCTES)}

\author[Foulquié-Moreno]{Ana Foulquié-Moreno}
\address{CIDMA, Departamento de Matemática, Universidade de Aveiro, 3810-193 Aveiro, Portugal}
\email{foulquie@ua.pt}
\thanks{The second  author acknowledges
Center for Research \& Development in Mathematics and Applications 
is supported through the Portuguese Foundation for Science and Technology
references UIDB/04106/2020 and UIDP/04106/2020}

\author[Mañas]{Manuel Mañas}
\address{Departamento de Física Teórica, Universidad Complutense de Madrid, Plaza Ciencias 1,
\linebreak
28040-Madrid, Spain}
\email{manuel.manas@ucm.es}
\thanks{The third author was partially supported by the
Spanish ``Agencia Estatal de Investigación'' research
 pro\-jects:
[PGC2018-096504-B-C33], \emph{Ortogonalidad y~Apro\-ximación: Teoría y Aplicaciones en Física Mate\-má\-tica},
and
[PID2021- 122154NB-I00], \emph{Ortogonalidad y aproximación con aplicaciones en machine learning y teoría de la probabilidad}}


\date{\today}

\subjclass[2020]{42C05,33C45,33C47,60J10,60Gxx}

\keywords{Bounded banded matrices, oscillatory matrices, totally nonnegative matrices, mixed multiple orthogonal polynomials, Favard spectral representation}
 
\begin{abstract}
Banded bounded matrices, which represent non normal operators, of oscillatory type that admit a positive bidiagonal factorization are considered.
To motivate the relevance of the oscillatory character the Favard theorem for Jacobi matrices is revisited and it is shown that after an adequate shift of the Jacobi matrix one gets an oscillatory matrix.
In this work we present a spectral theorem for this type of operators and show how the theory of multiple orthogonal polynomials apply.
\end{abstract} 
 
\maketitle

%

\section{Introduction} \label{sec:1}

Since the celebrated Stone theorem it is known that a selfadjoint operator with simple spectrum defined in a Hilbert space can be represented as a symmetric Jacobi operator in a suitable orthonormal basis,
cf. for instance~\cite{Akhiezer_Glazman}.
The infinite matrix representative of these Jacobi operators is tridiagonal, which leads to a three term recurrence relation definition of an orthonormal polynomial basis, with respect to the spectral measure of the Jacobi operator.
In this way the theory of orthogonal polynomials is instrumental to understand the spectrality of a selfadjoint operator in a Hilbert~space.

The construction of the spectral measure for these Jacobi operators can be reached by means of a Markov theorem, i.e. as a uniform limit of ratio for polynomials satisfying the three term recurrence relations coming from the Jacobi operator.
The spectral measure is in fact a measure of orthogonality for the sequence of orthogonal polynomials related to the Jacobi operator, as they share its moments (cf.~\cite{Chihara,Ismail}).

In the banded Hessenberg operators, the selfadjointness no more takes place (cf. for instance~\cite{PBF_1}). Nevertheless these banded operators case can be read as high order (bigger than $3$ term) recurrence relations for important recurrence polynomials called type II ones and linear forms of type I in the fundamental book of Nikishin and Sorokin~\cite{nikishin_sorokin}.
Moreover, a biorthogonality can be derived between these two systems (the ones of polynomials and the linear forms) that stands as duality relations. In fact, this is a sort of Favard theorem for banded Hessenberg operators.
Several authors try to extend these Favard type theorems (cf.~\cite{Sorokin_Van_Iseghem_1,Sorokin_Van_Iseghem_2,Sorokin_Van_Iseghem_3}) to general banded operators (with more than one upper diagonals). In this case the multiple orthogonality has been extended to the so called mixed one.
A natural question arise from these facts: Do multiple (or mixed) orthogonal polynomials
can describe banded operators?
That is, they can be used to give an interpretations for the spectral points, as well as of the spectral measure of the operator.

A first attempt, in the Hessenberg context, has been made by Valery Kalygin~\cite{Kalyagin,Kaliaguine} where the author defines a class of operators related to the Hermite--Padé approximants and connects their spectral analysis with the asymptotic properties of polynomials defined by systems of orthogonality relations (that coincides with the notion of multiple orthogonality).
Some years later, the author in a joint work with Alexander Aptekarev and Janette Van Iseghem~\cite{Aptekarev_Kaliaguine_VanIseghem} made the analysis of banded Hessenberg operators with one upper diagonal based on the analysis of the genetic sums formulas for the moments of the operator.

Since the magnificent book of Gantmacher and Krein~\cite{Gantmacher-Krein} it is well known that the feature of being oscillatory of a Jacobi matrix is important in the study of the spectral points of a Jacobi operator. They are also instrumental in the study of the orthogonal polynomials sequences used to define the spectral measure of the operator.
In this work we highlight, avoiding technicalities, the main results in~\cite{espectral} about the spectral measure associated with a bounded banded operators with positive bidiagonal factorization (in this case the operator is oscillatory). 
We take the Jacobi operators as case study in \S~\ref{sec:2}, to give an explanation of the ideas we intend to use in this paper. 
To a good introduction in the subject of oscillatory matrices we refer the book by Fallat and Johnson~\cite{Fallat-Johnson}.
In~\S~\ref{sec:3} we consider an oscillatory banded operator with $2$ upperdiagonals and $3$ subdiagonals as a representative case, and define the left and right eigenvectors for its truncation matrices.
We succeed in giving an interpretation of the discrete mixed multiple orthogonality between the left and right eigenvectors of the oscillatory banded matrix. The last section is devoted to the spectral theorem for the banded operator, where we also present a Gauss like quadrature formula.


\section{Revisiting bounded Jacobi matrices} \label{sec:2}

To illustrate the approach of this paper to banded matrices we will consider first the classical spectral theory for Jacobi matrices. We will proceed emphasizing the ideas of our approach that will be extended in the next section to banded matrices.

\subsection{The Jacobi matrix and recursion polynomials}

Let us consider the tridiagonal semi-infinite real matrix
\begin{align}\label{eq:Jacobi_matrix_def}
J & \coloneqq
\left[\begin{NiceMatrix}[columns-width = auto]
 m_0& 1 &0&\Cdots&\\[-.25cm]
{\ell}_1 &m_1& 1& \Ddots&\\
0&\ell_2&m_2&1&\\
\Vdots& \Ddots& \Ddots & \Ddots&\Ddots\\
&&&&\\
\end{NiceMatrix}
\right],
&& \ell_j >0 , && j \in \mathbb Z_+ \coloneqq \{ 1,2, \ldots \} .
\end{align}
We assume for our propose that $\ell_{0} = 1$ as well as $\mathbb N \coloneqq \{ 0,1, \ldots \} $.
This matrix is symmetrizable, by the positive diagonal matrix
\begin{align*}
H&=\operatorname{diag} \left[ \begin{NiceMatrix}
H_0 & H_1 & \Cdots \end{NiceMatrix} \right], & H_0&=1, &H_n\coloneqq \ell_1\cdots \ell_{n}.
\end{align*}
In fact, $H^{-\frac{1}{2}}JH^{\frac{1}{2}}$ is symmetric.
The recursion polynomials $\big\{ P_n \big\}_{n \in \mathbb N}$ are monic polynomials with $\deg P_n= n$ satisfying the recursion relation
\begin{align}\label{eq:recursion_Jacobi}
\ell_n P_{n-1} +m_n P_n+P_{n+1}&=xP_n, & n&\in \mathbb N,
\end{align}
with $P_0=1$ and $P_{-1} = 0$. That is, 
\begin{align*}
P(x) =\left[\begin{NiceMatrix}
P_0 (x) &
P_1 (x) &
\Cdots
\end{NiceMatrix}\right]^\top ,
\end{align*}
satisfies the eigenvalue property $J P = x P $. 
Dual to $P(x)$ we introduce 
\begin{align*}
Q (x) &=
\left[
\begin{matrix}
Q_0(x) & Q_1(x)&\cdots
\end{matrix}
\right], 
\end{align*}
that are left eigenvectors of the semi-infinite matrix $J$, i.e.,
$Q(x) J=x Q(x)$, and the initial conditions, that determine these polynomials uniquely, are taken as $Q_0=1$.
It can be easily shown that
$ Q_n=\dfrac{P_n}{H_n} $.
Let us introduce the leading principal submatrices $J^{[N]}=J[\{0,1,\ldots,N\}]$,~i.e.
\begin{align*}
J^{[N]}&\coloneqq
\left[\begin{NiceMatrix}[columns-width = auto]
m_0& 1 &0&\Cdots&&0 \\[-.25cm]
\ell_1 &m_1& 1& \Ddots&&\Vdots\\
0&\ell_2&m_2&1&&\\
\Vdots& \Ddots& \Ddots & \Ddots&\Ddots&0\\
&&&&&1\\
0&\Cdots&&0&\ell_N&m_N
\end{NiceMatrix}\right]\in\mathbb R^{(N+1)\times (N+1)},
\end{align*}
with $ \Delta_N \coloneqq \det J^{[N]}$.
As $J^{[N]}$ is symmetrizable its eigenvalues are real.
Then, from the given definition of the recursion polynomials we get that they are the characteristic polynomials of these submatrices
\begin{align*}
P_{N+1}(x)=\det\big( xI_{N+1}-J^{[N]}\big), && N \in \mathbb N .
\end{align*}
To prove it we expand the determinant along the last row to see that these characteristic polynomials satisfy \eqref{eq:recursion_Jacobi} and have the same initial condition. 


The next sequence of polynomials will play an important role in the study of the spectrality of the Jacobi operator.
The second kind polynomials $\big\{P_n^{(1)}\big\}_{n \in \mathbb N}$ are defined by the following initial conditions
\begin{align}\label{eq:initial_conditions_second}
P^{(1)}_{-1}&=1, &P^{(1)}_0&=0,
\end{align}
For the recursion polynomials of the second kind one finds
\begin{align*}
P^{(1)}_{N+1} & = e_1^\top \operatorname{adj}\big(x I_{N+1}-J^{[N]}\big)e_1 \\
 & =
\left|
\begin{NiceMatrix}[columns-width = auto]
x-m_{1}& -1 &0&\Cdots&&0\\[-.25cm]
-\ell_{2} & x-m_{2}& -1& \Ddots&&\Vdots\\
0& -\ell_{3}& x-m_{3}&-1&&\\
\Vdots& \Ddots& \Ddots & \Ddots&\Ddots&0\\
&&&&&1\\
0&\Cdots&&0&-\ell_N&x-m_N
\end{NiceMatrix}\right| .
\end{align*}
Here $e_1$ is the semi-infinite vector with entries $\delta_{1,n}$ and by $ \operatorname{adj}$ we mean the adjugate matrix.
This can be proven by expanding the determinant along the last row to get that the recursion relation \eqref{eq:recursion_Jacobi} as well as the initial conditions \eqref{eq:initial_conditions_second} are fulfilled.
Associated with the principal submatrices 
\begin{align*}
J^{[N,k]}&\coloneqq \left[\begin{NiceMatrix}[columns-width = auto]
m_{k}& 1 &0&\Cdots&&0\\[-.25cm]
\ell_{k+1} &m_{k+1}& 1& \Ddots&&\Vdots\\
0&\ell_{k+2}&m_{k+2}&1&&\\
\Vdots& \Ddots& \Ddots & \Ddots&\Ddots&0\\
&&&&&1\\
0&\Cdots&&0&\ell_N&m_N
\end{NiceMatrix}
\right]\in\mathbb R^{(N+1-k)\times (N+1-k)} ,
\end{align*}
with $ \Delta_{N,k} \coloneqq \det J^{[N,k]}$, we define the truncated polynomials
\begin{align*}
P^{[k]}_{N+1}= \det (x I_{N+1-k}-J^{[N,k]}), && k \in \{0,1,\ldots,N\} .
\end{align*}
An expansion of these determinants along the first row show that they satisfy the recursion relation,
$P^{[N+1]}_{N+1}=1$, $P^{[N+2]}_{N+1}=0$ and
\begin{align*}
\ell_{k+1}P^{[k+2]}_{N+1}+m_kP^{[k+1]}_{N+1}+P^{[k]}_{N+1}&=xP^{[k+1]}_{N+1}, & k&\in\{0,1,\ldots,N\} .
\end{align*}

\subsection{Eigenvalues, left and right generalized eigenvectors}
Despite the fact that, in the Jacobi scenario, as we will show later, the eigenvalues are going to be simple, we allow at this stage of the discussion that possible degenerate eigenvalues could appear. Let us introduce
$P^{\langle N\rangle} (x) \coloneqq 
\left[
\begin{matrix}
P_0 (x) &
P_1 (x) &
\cdots &
P_N (x)
\end{matrix}
\right]^\top$
so that
\begin{align*}
J^{[N]} P^{\langle N\rangle}(x)
+
\left[\begin{matrix}
0 & \cdots & 0 & P_{N+1} (x)
\end{matrix}
\right]^\top = x P^{\langle N\rangle}(x) .
\end{align*}
From this equation and the relations obtained from it by taking derivatives we get that the set of zeros and corresponding multiplicities of the polynomial $P_{N+1}(x)$, say $\sigma^{[N]}=\big\{ \lambda^{[N]}_n,\kappa_n^{[N]} : n= 1, \ldots , r^{[N]} \big\}$,
$\kappa_1^{[N]} + \cdots+\kappa^{[N]}_{r^{[N]}}=N+1$ is the spectrum of $ J^{[N]}$, with corresponding right generalized eigenvectors
\begin{align*}
u^{\langle N\rangle,(j)}_n&\coloneqq 
\frac{\operatorname d^k P^{\langle N\rangle}}{\operatorname d x^j}
\bigg|_{x=\lambda^{[N]}_{n}}, &n&\in\{1,\ldots,r^{[N]}\},& j&\in\{0,\ldots,\kappa^{[N]}_n-1\},
\end{align*}
with rank $r^{[N]}$. The~vectors $u^{\langle N\rangle,(0)}_n$ are right eigenvectors, and the set 
\begin{align*}
\big\{ u^{\langle N\rangle,(\kappa^{[N]}_n-1)}_n,\ldots, u^{\langle N\rangle,(1)}_n,u^{\langle N\rangle,(0)}_n\big\} ,
\end{align*}
 is a right Jordan chain.
Similarly, we construct a set of generalized left eigenvectors of the matrix~$J^{[N]}$. For 
$ Q^{\langle N\rangle} (x) \coloneqq
 \left[
 \begin{matrix}
 Q_{0} (x) &Q_{1} (x) &\cdots & Q_{N} (x)
 \end{matrix}
 \right]$
 and $k\in\{1,\ldots,N+1\}$, the~vectors
 \begin{align*}
 w^{\langle N\rangle ,(j)}_n&\coloneqq \frac{\operatorname d^j Q^{\langle N\rangle}}{\operatorname d x^j }\bigg|_{x=\lambda^{[N]}_n}, &n&\in\{1,\ldots,r^{[N]}\},& j&\in\{0,\ldots,\kappa^{[N]}_n-1\},
 \end{align*}
 are generalized left eigenvectors of $ J^{[N]}$. The $w^{\langle N\rangle ,(0)}_n$ are the left eigenvectors and 
 the~set
 \begin{align*}
 \big\{w^{\langle N\rangle,(\kappa_n^{[N]}-1)}_n,\ldots, w^{\langle N\rangle,(1)}_n,w^{\langle N\rangle,(0)}_n\big\} ,
 \end{align*}
 is a left Jordan chain.

\subsection{The Christoffel--Darboux formulas}

Noticing that
\begin{align*}
\left[
\begin{matrix}
Q_{0}(x)& \cdots & Q_{N}(x)
\end{matrix}
\right]
J^{[N]}
+
\left[\begin{matrix}
Q_{N+1} (x)& 0&\cdots & 0
\end{matrix}
\right]
&=x
\left[\begin{matrix}
Q_{0}(x)& \cdots & Q_{N}(x)
\end{matrix}
\right] ,
\\
J^{[N]}
\left[
\begin{matrix}
P_0(y) & \cdots&
P_N(y)
\end{matrix}
\right]^\top
+
\left[
\begin{matrix}
0 & \cdots & 0 & P_{N+1} (y)
\end{matrix}
\right]^\top &
=y 
\left[
\begin{matrix}
P_0(y) & \cdots & P_N(y)
\end{matrix}
\right]^\top ,
\end{align*}
we deduce that
\begin{multline*}
\left[
\begin{matrix}
Q_{N+1} (x)& 0&\cdots &
0
\end{matrix}
\right]
\left[
\begin{matrix}
P_0(y) \\
\vdots \\
P_N(y)
\end{matrix}
\right]
-
\left[
\begin{matrix}
Q_{0}(x)& \cdots & Q_{N}(x)
\end{matrix}
\right]
\left[
\begin{matrix}
0\\[-.15cm]
\vdots \\
0
\\
P_{N+1} (y)
\end{matrix}
\right]
 \\
 =(x-y )
\left[
\begin{matrix}
Q_{0}(x)& \cdots & Q_{N}(x)
\end{matrix}
\right]
\left[
\begin{matrix}
P_0(y)\\
\vdots\\
P_N(y)
\end{matrix}
\right]
,
\end{multline*}
and we find the following Christoffel--Darboux relation 
\begin{align*}
\sum_{n=0}^{N}P_{n} (x) P_n (y)=\frac{1}{H_N}\frac{P_{N}(x)P_{N+1}(y)-P_{N+1}(x)P_N(y)}{x-y} , 
\end{align*}
and the associated confluent Christoffel--Darboux relation 
\begin{align*}
\sum_{n=0}^{N} P_n^2 (x) = \frac{ P^\prime_{N} (x) P_{N+1} (x) -P^\prime_{N+1} (x) P_N (x) }{H_{N}} .
\end{align*}

\subsection{The case of simple eigenvalues and discrete orthogonality}

Let us assume that for each $N$ the polynomial $P_{N+1}$ has simple zeros at
the set $\big\{\lambda^{[N]}_k : k= 1, \ldots , N+1\big\}
 $.
Then, we have right eigenvectors $u^{\langle N\rangle}_n=u_n^{\langle N\rangle,(0)}$, and left eigenvectors $w_n^{\langle N\rangle}=w_n^{\langle N\rangle,(0)}$, instead of generalized eigenvectors, and the Jordan chains have only one element, the corresponding eigenvectors. Later on, we will see that, in fact, this is the case for the Jacobi matrix given in \eqref{eq:Jacobi_matrix_def} with $\ell_k>0$, $k \in \mathbb Z_+$.
 For $k=1,\ldots,N+1$, the vectors $u_k^{\langle N\rangle}\coloneqq P^{\langle N\rangle}\big(\lambda^{[N]}_k\big)$ ($\tilde w^{\langle N\rangle}_k\coloneqq Q_k^{\langle N\rangle}$) are right (left) eigenvectors of $ J^{[N]}$.
Given any set of left eigenvectors $\big\{\tilde w^{[N]}_k : k=1 , \ldots , N+1 \big\}$, a normalized basis $\big\{w^{\langle N\rangle}_k : k= 1, \ldots , N+1 \big\}$ biorthogonal to the basis of right eigenvectors $\big\{u^{\langle N\rangle}_k : k= 1, \ldots , N+1 \big\}
 $ is given by
\begin{align*}
w^{\langle N\rangle}_{k,n}=\frac{\tilde w^{\langle N\rangle}_{k,n}}{\sum_{l=1}^{N+1}\tilde w^{\langle N\rangle}_{k,l}P_{l-1}(\lambda^{[N]}_k)}.
\end{align*}
In particular, the following expression holds
\begin{align*}
w^{\langle N\rangle}_{k,n}&=\frac{Q_{n-1}(\lambda^{[N]}_k) }{\sum_{l=0}^{N}Q_{l}(\lambda^{[N]}_k)P_{l}(\lambda^{[N]}_k)}=-
H_{N}
\frac{Q_{n-1}(\lambda^{[N]}_k)
}{
P_N (\lambda^{[N]}_k)P^\prime_{N+1}(\lambda^{[N]}_k)} .
\end{align*}
Moreover,
we can write 
\begin{align*}
w^{[N]}_{k,n}&= Q_{n-1}(\lambda^{[N]}_k )\mu^{[N]}_{k}, &
\mu_{k}^{[N]}&\coloneqq w^{\langle N\rangle}_{k,1}=\frac{1}{\sum_{l=0}^{N}Q_{l} (\lambda^{[N]}_k )P_{l}(\lambda^{[N]}_k )}.
\end{align*}
Here $\mu^{[N]}_k$ will be call the mass at that eigenvalue.
Observe that for these masses we~have 
\begin{align}\label{eq:masses_Christoffel_numbers}
\mu_{k}^{[N]}&=\frac{1}{\lambdaup_{N,k}}, &\lambdaup_{N,k}\coloneqq \sum_{l=0}^{N}\frac{P^2_{l} (\lambda^{[N]}_k)}{H_l} ,
\end{align}
where $\lambdaup_{N,k}$ are the so called Christoffel numbers. This leads to the positivity of the masses. For more on Christoffel numbers see~\cite[\S 2.4]{Ismail}.
Hence, the masses 
can be expressed as
\begin{align}\label{eq:masses_Jacobi_intro}
\mu_{k}^{[N]}&=\frac{P_{N+1}^{(1)} (\lambda^{[N]}_k)}{P^\prime_{N+1}(\lambda^{[N]}_k)}.
\end{align}
As the masses are positive we conclude that recursion polynomials $P_{N+1}$ strictly interlace its second type polynomials $P_{N+1}^{(1)}$.

 For the corresponding matrices $U$ (with columns the right eigenvectors~$u_k$ arranged in the standard order) and~$W$ and (with rows the left eigenvectors~$w_k$ arranged in the standard order) we find 
$UW=WU=I_{N+1}$ and, in terms of $D=\operatorname{diag}
\big[ \begin{matrix} \lambda^{[N]}_1,\ldots,\lambda^{[N]}_{N+1} \end{matrix}\big]$, we have
$UD^nW=\big(J^{[N]}\big)^n$. Consequently, the following equations are satisfied
\begin{align}
\label{eq:Pw}
\sum_{j=1}^{N+1}P_{k-1}(\lambda^{[N]}_j)w^{[N]}_{j,l}&=\delta_{k,l}, && k,l \in \{ 1, \ldots , N+1 \}, \\
\label{eq:B_lambda_n_w}
\sum_{j=1}^{N+1}P_{k-1}(\lambda^{[N]}_j)(\lambda^{[N]}_j)^nw^{[N]}_{j,l}&=\big(( J^{[N]})^n\big)_{k,l},
 && n \in \{ 0, \ldots , N \} .
\end{align}
In terms of the singular measure, with support on the zeros of $P_{N+1}$, given~by
\begin{align*}
\mu^{[N]}&\coloneqq \sum_{j=1}^{N+1}\mu^{[N]}_{j}\delta\big(z-\lambda^{[N]}_j\big), 
\end{align*}
 conditions \eqref{eq:Pw} and \eqref{eq:B_lambda_n_w} can be recast as the 
 orthogonality relations
\begin{align*}
\left\langle\mu^{[N]}, x^k P_{n}\right\rangle&=0, & k&=0,\ldots,n-1.
\end{align*}
Similarly, we obtain 
\begin{align*}
\left\langle Q_{k}\mu^{[N]} ,x^n\right\rangle&=0, & n&\in\{0,1,\ldots,k-1\}, & k\in\{1,\ldots, N\}.
\end{align*}
Finally, the following biorthogonal relations are satisfied
\begin{align*}
\left\langle Q_{k}\mu^{[N]},P_l\right\rangle=\delta_{k,l}, && k,l\in\{0,\ldots,N\} .
\end{align*}
\enlargethispage{.245cm}
This way of writing the orthogonality relations is prepared to be extended to multiple orthogonality later in the paper. Notice also that the polynomials $p_n\coloneqq \frac{P_n}{\sqrt H_n}=\sqrt H_n Q_n$ are orthonormal polynomials.
From $ \sum_{j=1}^{N+1}w^{\langle N\rangle}_{j,1}=1$ we see that
$\sum_{j=1}^{N+1}\mu^{[N]}_{j,1}=1$. Then, we find a Lebesgue--Stieltjes representation of this singular measure.
In terms of the piecewise continuous function
\begin{align*}
\psi^{[N]}&\coloneqq \begin{cases}
0, & x<\lambda^{[N]}_{N+1},\\[2pt]
\mu^{[N]}_{1}+\cdots+\mu^{[N]}_{k}, & \lambda^{[N]}_{k+1}\leqslant x< \lambda^{[N]}_{k}, \quad k\in\{1,\ldots,N\},\\[2pt]
\mu^{[N]}_{1}+\cdots+\mu^{[N]}_{N+1}=1 , &x\geqslant \lambda_{1}^{[N]},
\end{cases}
\end{align*}
we have $\mu ^{[N]}=\operatorname d\psi^{[N]}$.
The resolvent matrix $ R^{[N]} $ of the leading principal submatrix~$ J^{[N]}$~is 
\begin{align*}
R^{[N]} (z) \coloneqq \big(z I_{N+1}- J^{[N]}\big)^{-1} = \frac{\operatorname{adj}\big(z I_{N+1}- J^{[N]}\big)}{\det(z I_{N+1}-J^{[N]})}.
\end{align*}
From the spectral decomposition of the matrix $ J^{[N]}$, we obtain 
$R^{[N]} (z) = U(zI_{N+1}-D)^{-1}W$. The corresponding Weyl function $S^{[N]}$ defined by
\begin{align*}
S^{[N]} (z) &\coloneqq e_1^\top\big(zI_{N+1}- T^{[N]}\big)^{-1} e_1,
\end{align*}
can be expressed as follows
\begin{align*}
S^{[N]}(z)&=\frac{P^{(1)}_{N+1}(z)}{P_{N+1}(z)}=\sum_{n=1}^{N+1}\frac{\mu^{[N]}_{n}}{z-\lambda^{[N]}_n}.
\end{align*}

\subsection{Oscillatory matrices, interlacing properties and spectral theorem}\label{section:intro_spectral}

If the matrix $J$ is bounded all the possible eigenvalues of the submatrices~$J^{[N]}$ belong to the disk $D(0,\|J\|)$.
As all the eigenvalues are real, let us consider those that are negative, and let $b$ be the supreme of the absolute values of all negative eigenvalues. Notice that~$b \leqslant \|J\|$. 
\begin{thm}\cite[Theorem 1.7]{espectral}
For $s\geqslant b$ the matrix $J_s=J+s \, I$ is oscillatory.
\end{thm}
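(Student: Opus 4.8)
The plan is to reduce the oscillatory character of the semi-infinite matrix $J_s$ to a statement about its leading principal submatrices $J_s^{[N]}=J^{[N]}+sI_{N+1}$, and then to the positivity of their leading principal minors. I would invoke the Gantmacher--Krein criterion (cf.~\cite{Gantmacher-Krein,Fallat-Johnson}): a nonsingular tridiagonal matrix all of whose sub- and super-diagonal entries are positive is oscillatory if and only if all its leading principal minors are positive, equivalently if and only if it admits a factorization into a lower and an upper bidiagonal matrix with positive entries. For $J_s$ the off-diagonal entries are $\ell_j>0$ on the subdiagonal and $1>0$ on the superdiagonal, independently of $s$, so the entire question reduces to the positivity of the leading principal minors of $J_s^{[N]}$ for every $N$.

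I would then express these minors through the recursion polynomials. The $k\times k$ leading principal minor of $J_s$ is $\det\big(J^{[k-1]}+sI_k\big)$, and since $P_k(x)=\det\big(xI_k-J^{[k-1]}\big)$ one has
\begin{align*}
\det\big(J^{[k-1]}+sI_k\big)=(-1)^kP_k(-s)=\prod_{j=1}^{k}\big(s+\lambda^{[k-1]}_j\big),
\end{align*}
where $\lambda^{[k-1]}_1<\cdots<\lambda^{[k-1]}_k$ are the real, simple eigenvalues of $J^{[k-1]}$. Hence every leading principal minor is positive exactly when $s+\lambda^{[k-1]}_j>0$ for all $j$ and all $k$, that is, when $s$ strictly dominates the absolute value of the most negative eigenvalue occurring in any truncation. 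By the very definition of $b$ this already gives total nonnegativity (all minors $\geq 0$) for $s\geq b$, and strict positivity for $s>b$.

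The delicate point, which I expect to be the main obstacle, is the boundary value $s=b$: naively the shift could send the smallest eigenvalue of some truncation precisely to $0$ and make a minor vanish. Here I would use the strict interlacing of the zeros of consecutive recursion polynomials. Since $J^{[k-1]}$ is symmetrizable with strictly positive off-diagonal entries, the $P_k$ form a Sturm sequence and their zeros strictly interlace, so $\lambda^{[k]}_1<\lambda^{[k-1]}_1$. Consequently the sequence $\big(-\lambda^{[N]}_1\big)_N$ of smallest-eigenvalue absolute values is strictly increasing, hence every term lies strictly below its supremum; that is, $-\lambda^{[k-1]}_1<b$ for every finite $k$. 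Therefore $s+\lambda^{[k-1]}_1\geq b+\lambda^{[k-1]}_1>0$ for all $k$ even when $s=b$, and all leading principal minors are strictly positive (if $\lambda^{[k-1]}_1\geq 0$ the factor is already $\geq s\geq b>0$, the existence of negative eigenvalues forcing $b>0$).

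Finally I would assemble the pieces: for $s\geq b$ each $J_s^{[N]}$ is a nonsingular tridiagonal matrix with positive off-diagonal entries and strictly positive leading principal minors, hence oscillatory. Equivalently, it admits a positive bidiagonal factorization, the pivots $u_k=\det J_s^{[k]}/\det J_s^{[k-1]}$ and multipliers $\ell_j/u_{j-1}$ all being positive. Since this holds for every truncation, $J_s$ is oscillatory in the sense used in this work.
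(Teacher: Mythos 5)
The paper states this theorem without proof, merely citing \cite[Theorem 1.7]{espectral}, so there is no in-text argument to compare against; your proof is correct and follows exactly the route the paper's framework points to, namely the Gantmacher--Krein reduction of the oscillatory property of a tridiagonal matrix with positive off-diagonal entries to the positivity of the leading principal minors of its truncations, computed here as $\det\big(J^{[k-1]}+sI_k\big)=\prod_{j}\big(s+\lambda^{[k-1]}_j\big)$. The one genuinely delicate step, the boundary case $s=b$, is handled correctly: strict interlacing of the zeros of consecutive recursion polynomials makes the sequence $\big(-\lambda^{[N]}_1\big)_N$ strictly increasing, so it never attains its supremum $b$ and every factor remains strictly positive.
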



As a corollary of this theorem, we get that all eigenvalues are simple, and that~$P_{N+1}$ interlaces $P_{N}$ and $P^{(1)}_{N+1}$.
 Indeed, the characteristic polynomial $P_{N+1}(x-s)$ of the oscillatory matrix $J^{[N]}_s$ interlaces the characteristic polynomials of the submatrices $J^{[N]}_s(1)=J^{[N,1]}_s$, i.e. $P^{(1)}_{N+1}(x-s)$, and of $J^{[N]}_s(N+1)=J^{[N-1]}_s$, i.e. $P_N(x-s)$. 
Hence, from \eqref{eq:masses_Jacobi_intro} we may deduce the positivity of the masses, i.e. $\mu_k^{[N]}>0$ for all $k\in\{1,\ldots,N+1\}$ from the oscillatory character $J^{[N]}_s$, appealing to \eqref{eq:masses_Jacobi_intro} and the interlacing property of the recursion polynomials and its polynomial of the second type. Hence, we have at hand an alternative way to \eqref{eq:masses_Christoffel_numbers}, in terms of the Christoffel numbers, to find positivity.

Now, recognizing the fact that for the Jacobi matrix $J$ as in \eqref{eq:Jacobi_matrix_def} exists an~$s$ such that $J_s=sI+J$ is oscillatory gives a very fast access to important properties of the associated recursion polynomials and to the Stone--Shoat--Favard's spectral theorem, see~\cite[\S 4.1]{Simon}.

According to Helly's selection Principle~\cite{Chihara} for any uniformly bounded sequence $\big\{\psi^{[N]}\big\}_{N \in \mathbb N}$ of non-decreasing functions defined in $\mathbb R$, there exists a convergent subsequence converging to a non-decreasing function $\psi$ defined in $\mathbb R$. This leads to Helly's second theorem. Any uniformly bounded sequence $\big\{\psi^{[N]}\big\}_{N \in \mathbb N}$ of non-decreasing functions on a compact interval $[a,b]$ 
with limit function $\psi$, then for any continuous function $f$ in $[a,b]$ we have
\begin{align*}
\lim_{N\to\infty} \int_a^b f(x)\operatorname d\psi^{[N]}(x)=\int_a^bf \operatorname d \psi(x).
\end{align*}
Therefore, the positivity of the masses ensures that the functions $\psi^{[N]}$ are non decreasing and uniformly bounded by unity. Hence, Helly's results lead to the existence of a nondecreasing functions $\psi$ and corresponding positive Lebesgue--Stieltjes measures $\operatorname d\psi$ with compact support $\Delta = [a,b]$ such that the orthogonal relations hold
\begin{align*}
\int_\Delta x^k P_{n}(x)\operatorname d \psi(x)&=0, & k&\in\{0,\ldots,n-1\}, && \text{``type II''} \\
\int_{\Delta} Q_{k}(x)\operatorname d \psi(x)x^n&=0, & n&\in\{0,1,\ldots,k-1\}. && \text{``type I''} 
\end{align*}
These polynomial sequences of types II and I are biorthogonal, i.e.,
\begin{align*}
\int_{\Delta} Q_{k}(x)\operatorname d \psi(x)P_l(x)&=\delta_{k,l},
&&
k,l\in \mathbb N .
\end{align*}
Also the type I and type II are just the same thing in this standard non multiple situation. 
Helly's second theorem leads to the spectral representation in terms of the spectral function $\psi$
\begin{align*}
e_1^\top J^k e_1 =\int_\Delta t^k\operatorname d\psi(t), && n \in \mathbb N,
 &&
e_1^\top (z I- J)^{-1}e_1
 =\int_\Delta\frac{\operatorname d\psi(t)}{z-t}.
\end{align*}

\section{Mixed Multiple orthogonality} \label{sec:3}

In this paper we consider a bounded banded operator $T$ whose semi-infinite matrix representation
has $2$ superdiagonals and $3$ subdiagonals, given by
\begin{align}\label{eq:monic_Hessenberg}
T&=
\left[\begin{matrix}
T_{0,0} &T_{0,1} &T_{0,2} \\[.1cm]
T_{1,0}& T_{1,1}& T_{1,2} & T_{1,3} \\[.1cm]
T_{2,0}&T_{2,1}& T_{2,2}& T_{2,3} & T_{2,4} \\[.1cm]
T_{3,0}&T_{3,1}& T_{3,2}& T_{3,3} & T_{3,4} & T_{3,5} \\[.1cm]
 &T_{4,1}&T_{4,2}& T_{4,3}& T_{4,4} & T_{4,5} & T_{4,6} \\
 && \ddots&\ddots& \ddots&\ddots& \ddots & \ddots
\end{matrix}
\right],
&&
T_{n+3,n} \times T_{n,n+2} \neq 0, &&
n\in\mathbb N,
\end{align}
with leading principal submatrices $T^{[N]}=T[\{0,1,\ldots,N\}]$.
In what follows we will show that when this matrix, after a shift, admits a positive bidiagonal factorization and, consequently, is oscillatory, we can find a spectral Favard theorem. Now, we have mixed multiple orthogonal polynomial sequences with respect to a matrix of positive
Lebes\-gue--Stieltjes measures.

\subsection{Matrix interpretation of the multiple orthogonality}


Let us consider a matrix of functions,~$\Psi$, which are right continuous and of bounded variation in a closed interval,~$\Delta$, as well as the associated matrix of Lebesgue--Stieltjes measures, $\d \Psi$, i.e.
\begin{align*}
\Psi=
\left[\begin{matrix}
\psi_{1,1}&\psi_{1,2} & \psi_{1,3} \\
\psi_{2,1}&\psi_{2,2} & \psi_{2,3}
\end{matrix}
\right] && \text{and} &&
\d\Psi=
\left[\begin{matrix}
\d\psi_{1,1}& \d\psi_{1,2} & \d\psi_{1,3} \\
\d\psi_{2,1}& \d\psi_{2,2} & \d\psi_{2,3}
\end{matrix}
\right].
\end{align*}
The matrix of moments is given by
\begin{align} \label{eq:matriz_momentos}
\mathcal M
= \left[
\begin{matrix}
\Psi_{k+j}
\end{matrix}
\right]_{j=0,1,\ldots}^{k=0,1,\ldots} 
&&
\text{where} && 
\Psi_j = \int_\Delta x^{j} \, \d \Psi (x)
, &&
j \in \mathbb N .
\end{align}
Whenever 
 the leading principal submatrices of $\mathcal M$, $\mathcal M_n$, are nonsingular,~i.e.
\begin{align} \label{eq:regular}
\det \mathcal M_n \not = 0 , && n \in \mathbb N;
\end{align}
there exists sequences of vector polynomials
\begin{align*}
\mathscr B_n = \left[ \begin{matrix} B_n^{1} & B_n^{2}\end{matrix} \right],
&&
\mathscr A_n = \left[ \begin{matrix} A_n^{1} & A_n^{2} & A_n^{3} \end{matrix} \right],
&&
n \in \mathbb N ,
\end{align*}
with degrees
\begin{align*}
\deg B^{b}_{n} &=\left\lceil\frac{n+2-b}{2} \right\rceil-1 , &\deg A^{a}_{n} =\left\lceil\frac{n+2-a}{3} \right\rceil-1,
\end{align*}
for $b \in \{ 1,2 \}$ and $a \in \{ 1,2,3 \}$, such that the following biorthogonality conditions takes~place
\begin{align}\label{eq:biorthogonality}
\int_\Delta \mathscr B_n (x) \d \psi (x) \mathscr A_m^\top (x) = \delta_{n,m}, && n , m \in \mathbb N .
\end{align}
In this case there exists an infinite matrix $T$ of type~\eqref{eq:monic_Hessenberg} such that the following recursion relations hold
\begin{align*}
T \pmb{\operatorname B}^{b} =x \pmb{\operatorname B}^{b} , &&
\pmb{\operatorname A}^{a} T=x \pmb{\operatorname A}^{a} , && b \in \{1,2 \} , && a \in \{ 1,2,3 \} ,
\end{align*}
where
\begin{align*}
\pmb{\operatorname B}^{b} 
=\begin{bNiceMatrix}
B^{b}_0 &B^{b}_1 & \Cdots
\end{bNiceMatrix}^\top, && b \in\{1,2\} , 
&&
\pmb{\operatorname A}^{a} 
=\left[\begin{NiceMatrix}
A^{a}_0 & A^{a}_1& \Cdots
\end{NiceMatrix}\right], && a \in\{1,2,3\} ,
\end{align*}
determined by the initial conditions
\begin{align*}
\begin{cases}
A^{1}_0=1 , \\
A^{1}_1= \nu^{1}_1 , \\
A^{1}_{2}=\nu^{1}_{2} ,
\end{cases}
&&
\begin{cases}
A^{2}_0=0 , \\
A^{2}_1= 1 , \\
A^{2}_2= \nu^{2}_2 ,
\end{cases}
&&
\begin{cases}
A^{3}_0 =0 , \\
A^{3}_{1} = 0 , \\
A^{3}_{2} = 1,
\end{cases}
\begin{cases}
B^{1}_0=1 , \\
B^{1}_1= \xi_1 ,
\end{cases}
&&
\begin{cases}
B^{2}_0=0 , \\
B^{2}_1= 1 ,
\end{cases}
\end{align*}
with $\nu^{1}_1$, $\nu^{1}_2$, $\nu^{2}_2$, and $\xi_1$ arbitrary constants.
We also define the initial condition matrices
\begin{align*}
\nu & \coloneqq
\left[\begin{matrix}
1& 0 & 0 \\
\nu^{1}_1 & 1 & 0 \\[.15cm]
\nu^{1}_{2} & \nu^{2}_{2} & 1 
\end{matrix} \right] ,
&
\xi&\coloneqq
\left[ \begin{matrix}
1& 0 \\
\xi^{1}_1 & 1
\end{matrix} \right] .
\end{align*}
In~\cite{espectral} the authors proved that the moments of $\d \Psi$ are defined in terms of the powers of $T$ by
\begin{align*}
\xi^{-1} E_{[2]}T^n E_{[3]}^\top \nu^{-\top} &=\int_\Delta x^n\d\psi (x),
&& n \in \mathbb N,
\end{align*}
which implies that the spectral measure of $T$ (Stieltjes--Markov type functions associated with $\Psi$) is given~by
\begin{align*}
\xi^{-1} E_{[2]} (z I- T)^{-1}E_{[3]}^\top \nu^{-\top}
&=\int_\Delta\frac{\d\psi (x)}{z-x}\eqqcolon\hat \psi (z) ,
\end{align*}
where
\begin{align*}
E_{[r]}\coloneqq
\left[
\begin{NiceArray}{cccc|cccccc}
1&0&\Cdots&0 & 0&\Cdots&&& &\\
0&\Ddots^{\text{$r$ times}}&\Ddots& \Vdots &\Vdots^{\text{$r$ times}}&&&&\\
\Vdots&\Ddots&\Ddots& 0 &&&&&\\
0&\Cdots&0& 1 &0&\Cdots&&&
\end{NiceArray}
\right]
 .
\end{align*}

Now, we reinterpret the mixed orthogonality with respect to the spectral measure.

\begin{thm}
\label{pro:biorthogonalitystieltjes}
Let $a$ and $b$ be the end points of $\Delta$, respectively.
Let $C$ be a circle, negatively oriented (clockwise), such that $a$ and $b$ are in the interior of~$C$.
Then the spectral measure of $ T $, $\hat\Psi$ is a complex measure of biorthogonality for $\big\{ \mathscr B_n \big\}_{n\in\mathbb N }$ and $\big\{\mathscr A_n\big\}_{n\in\mathbb N }$ over~$C$, i.e.
\begin{align}
\label{eq:ortogonalidadeS}
\int_C \mathscr B_n (z) \hat\Psi (z) \mathscr A_m^\top (z)\, \frac{\operatorname d z}{2\pi \operatorname i}
 = \delta_{n,m} , && n , m \in \mathbb N .
\end{align}
\end{thm}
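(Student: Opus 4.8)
The plan is to deduce the contour biorthogonality \eqref{eq:ortogonalidadeS} from the real-line biorthogonality \eqref{eq:biorthogonality} by unfolding the Stieltjes transform $\hat\Psi(z)=\int_\Delta\frac{\d\psi(x)}{z-x}$ and invoking the residue theorem. First I would substitute this integral representation into the left-hand side of \eqref{eq:ortogonalidadeS}. Since $\mathscr B_n$ and $\mathscr A_m$ are (row vectors of) polynomials and $\d\psi$ is a $2\times 3$ matrix of measures, the scalar integrand expands componentwise as
\begin{align*}
\mathscr B_n(z)\,\d\psi(x)\,\mathscr A_m^\top(z)=\sum_{i=1}^{2}\sum_{j=1}^{3}B_n^{i}(z)\,\d\psi_{i,j}(x)\,A_m^{j}(z),
\end{align*}
so that the left-hand side becomes a finite sum of scalar iterated integrals $\int_C\int_\Delta \frac{B_n^{i}(z)A_m^{j}(z)}{z-x}\,\d\psi_{i,j}(x)\,\frac{\d z}{2\pi\operatorname i}$. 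Expanding into components is the clean move: it reduces the noncommutative matrix bookkeeping to scalar integrals, where Fubini and residues are transparent.

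The second step is to exchange the order of integration by Fubini. This is legitimate precisely because $C$ and $\Delta$ are disjoint compacta: as the endpoints $a,b$ of $\Delta$ lie strictly inside $C$, one has $\operatorname{dist}(C,\Delta)>0$, whence the Cauchy kernel $1/(z-x)$ is bounded on $C\times\Delta$; the polynomials $B_n^{i},A_m^{j}$ are bounded on the compact contour $C$; and each $\psi_{i,j}$ has finite total variation on $\Delta$. After the exchange, for each fixed $x\in\Delta$ I would evaluate the inner contour integral $\int_C \frac{B_n^{i}(z)A_m^{j}(z)}{z-x}\,\frac{\d z}{2\pi\operatorname i}$ by residues.

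The analytic input is that $\hat\Psi$ is holomorphic on $\CC\setminus\Delta$ while the polynomial product $B_n^{i}(z)A_m^{j}(z)$ is entire, so inside the circle $C$ the integrand has a single pole, the simple pole of the Cauchy kernel at $z=x\in\Delta$; no growth-at-infinity issue arises, since $C$ is a fixed finite contour and we only read off the enclosed residue. The residue theorem then returns the value $B_n^{i}(x)A_m^{j}(x)$, up to the sign dictated by the orientation of $C$. Reassembling the double sum over $i,j$ collapses the expression back to $\int_\Delta \mathscr B_n(x)\,\d\psi(x)\,\mathscr A_m^\top(x)$, which equals $\delta_{n,m}$ by \eqref{eq:biorthogonality}; the prescribed clockwise (negative) orientation of $C$ is exactly the choice that fixes the overall sign so that $+\delta_{n,m}$, rather than its negative, is produced.

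I expect the only genuinely delicate points to be bookkeeping rather than depth: justifying the Fubini interchange (which hinges on the separation $\operatorname{dist}(C,\Delta)>0$ and the finite total variation of the $\psi_{i,j}$), carrying the $2\times 3$ structure of $\d\psi$ correctly through the computation, and tracking the orientation sign. The conceptual heart---that the Cauchy transform encodes the matrix measure in such a way that a contour enclosing $\Delta$ recovers integration against $\d\psi$---is a direct consequence of Cauchy's integral formula once $\hat\Psi$ is known to be analytic off $\Delta$.
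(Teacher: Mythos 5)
Your proposal follows essentially the same route as the paper's own proof: substitute the Stieltjes--Markov representation of $\hat\Psi$, interchange the contour and measure integrals by Fubini (justified by the positive distance between $C$ and $\Delta$), evaluate the inner contour integral by Cauchy's integral formula with the clockwise orientation fixing the sign, and conclude from the real-line biorthogonality \eqref{eq:biorthogonality}. Your version merely adds the componentwise bookkeeping and the explicit justification of Fubini, which the paper leaves implicit.
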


\begin{proof}
We have the following identities
\begin{align*}
&
\int_C
\mathscr B_n (z) \hat \Psi (z) \mathscr A_m^\top (z)\, \frac{\operatorname d z}{2\pi \operatorname i} 
=\int_C \mathscr B_n (z)
\left(\int_\Delta \frac{\Psi (t)}{t-z}\, \operatorname d t \right) \mathscr A_m^\top (z)\, \frac{\operatorname d z}{2\pi \operatorname i} \\
&\phantom{olaola}
= \int_\Delta \left( \int_C \frac{\mathscr B_n (z)\Psi (t) \mathscr A_m^\top (z)}{t-z}\, \frac{\operatorname d z}{2\pi \operatorname i} \right)\, \operatorname d t \hspace{1.25cm} \text{(Fubini's theorem)}\\
&\phantom{olaola} = \int_\Delta \mathscr B_n (t) \Psi (t) \mathscr A_m^\top (t)\, \operatorname d t\hspace{1.625cm} \text{(Cauchy's integral theorem)} 
\end{align*}
and so from~\eqref{eq:biorthogonality} we get the desired result.
\end{proof}

\subsection{Gauss--Borel factorization}

Let us give a nice interpretation of this biorthogonality.
First of all, we can see that the Stieltjes--Markov like transformation of the weight matrix~$ \Psi$ is a generating function of the moments of $\Psi$. In fact,
\begin{align*}
\hat \Psi (z) = \int_{\Delta}\frac{ \psi (t)}{t-z}\operatorname d t
 = \sum_{n=0}^\infty \frac{ \int_{\Delta} t^n \Psi (t) \, \operatorname d t }{z^{n+1}}
 = \sum_{n=0}^\infty \frac{\Psi_{n}}{z^{n+1}} , 
\end{align*}
for $|z| > r \coloneqq \max \big\{ |t|, t \in \Delta \big\} $.
Hence, $\hat \Psi$ is an analytic function on compact sets of $\mathbb C \setminus \big\{ z \in \mathbb C : |z| < r \big\}$.
We write down the biorthogonality conditions~\eqref{eq:ortogonalidadeS} that we have just derived in Theorem~\ref{pro:biorthogonalitystieltjes},
\begin{align*}
\int_C \mathscr B_n (z) \hat \Psi (z) \mathscr A_m^\top (z)\, \frac{\operatorname d z}{2\pi \operatorname i}
 =
 \sum_{n=0}^\infty \int_C \mathscr B_n (z)\frac{\Psi_{n}}{z^{n+1}} \mathscr A_m^\top (z)\, \frac{\operatorname d z}{2\pi \operatorname i} ;
\end{align*}
and by the Cauchy integral formula we get
\begin{align}
\label{eq:bior}
 \delta_{n,m} = \int_C \mathscr B_n (z) \hat\Psi (z)\mathscr A_m^\top (z)\, \frac{\operatorname d z}{2\pi \operatorname i}
 =
 \sum_{k=0}^\infty \frac{\Big( \mathscr B_n (z) \Psi_{k} \mathscr A_m^\top (z) \Big)^{(k)}\Big|_{z \to 0}}{k!} .
\end{align}
Now, from the Leibniz rule for the derivatives, we know that
\begin{align*}
\frac1 {k!} \big( \mathscr B_n (z) \Psi_{k} \mathscr A_m^\top (z) \big)^{(k)}\Big|_{z \to 0} 
 =\sum_{j=0}^k \frac{ \big( \mathscr B_n \big)^{(j)} (0)}{j!} \Psi_k \frac{ \big( \mathscr A_m^\top \big)^{(k-j)} (0)}{(k-j)!} .
\end{align*}
With this identity we can reinterpret~\eqref{eq:bior} in matrix notation
\begin{align*}
\pmb{\operatorname B} \, \mathcal M \, \pmb{\operatorname A} = I
&& \text{where} &&
 \mathcal M &&
\text{is the matrix of moments~\eqref{eq:matriz_momentos},}
\end{align*}
and the matrices of Taylor polynomials coefficients of $\mathscr B_n$ and $\mathscr A_n$ are,
respectively,
\begin{align*} 
\pmb{\operatorname B}
  &=
 \left.\left[ \begin{smallmatrix}
 \mathscr B_0 
 & \\
 \mathscr B_1 
 & \big( \mathscr B_1 \big)^\prime 
 & \\
 \mathscr B_2 
 & \big( \mathscr B_2^\mathsf L\big)^\prime 
 & \frac 1{2!}\big( \mathscr B_2 \big)^{\prime\prime} 
 \\
 \vdots & \vdots & \vdots & \ddots \\
 \mathscr B_n 
 & \big( \mathscr B_n \big)^\prime 
 & \frac 1{2!}\big( \mathscr B_n \big)^{\prime\prime} 
 & \cdots & & \frac 1{n!}\big( \mathscr B_n \big)^{(n)} 
 \\
 \vdots & \vdots & \vdots & & & \vdots & \ddots
 \end{smallmatrix} \right] \right|_{z \to 0} ,
 \\
\pmb{\operatorname A}
  & =
 \left.\left[\begin{smallmatrix}
 \mathscr A_0^\top 
 & \\
 \mathscr A_1^\top 
 & \big( \mathscr A_1^\top \big)^\prime 
 & \\
 \mathscr A_2^\top 
 & \big( \mathscr A_2^\top \big)^\prime 
 & \frac 1{2!}\big( \mathscr A_2^\top \big)^{\prime\prime} 
 \\
 \vdots & \vdots & \vdots & \ddots \\
 \mathscr A_n^\top 
 & \big( \mathscr A_n^\top \big)^\prime 
 & \frac 1{2!}\big( \mathscr A_n^\top \big)^{\prime\prime} 
 & \cdots & & \frac 1{n!}\big( \mathscr A_n^\top \big)^{(n)} 
 \\
 \vdots & \vdots & \vdots & & & \vdots & \ddots
 \end{smallmatrix} \right]\right|_{z \to 0}^\top .
\end{align*}
As a conclusion, we get the Gauss--Borel factorization of the moment matrix
\begin{align}
\label{eq:gaussborel}
 \mathcal M =
\pmb{\operatorname B}^{-1} \, \pmb{\operatorname A}^{-1} .
\end{align}
We can see from~\eqref{eq:gaussborel} that the orthogonality relays on the Gauss--Borel factorization of the moment matrix.
Remember (cf.~\cite{espectral}) that the necessary and sufficient conditions in order to assure Gauss--Borel factorization of $\mathcal M$ is that~\eqref{eq:regular} takes place.

\subsection{Characteristic polynomials}

For the semi-infinite matrix $T$ in~\eqref{eq:monic_Hessenberg} we consider the polynomials~$P_N(x)$ as the characteristic polynomials of the truncated matrices~$T^{[N-1]}$,~i.e.,
\begin{align*}
P_{0}(x) = 1,
 &&
P_{N}(x)&
\coloneqq \det\big(xI_N-T^{[N-1]}\big), && N\in\mathbb Z_+.
\end{align*}
Obviously, $\deg P_N=N$.
For Hessenberg matrices~\cite{PBF_1} it happens that the characteristic polynomials up to a factor coincides with the right recursion polynomials. However, for the banded situation this does not hold in general. Nevertheless, the relation between determinants of the recursion polynomials, right or left, with the characteristic polynomials of the banded matrix $T$ relays
 on the following matrices of left and right recursion polynomials
\begin{align*}
A_N &\coloneqq
\left[
\begin{matrix}
A^{1}_N& A^{1}_{N+1} & A^{1}_{N+2} \\[2pt]
A^{2}_N& A^{2}_{N+1} & A^{2}_{N+2} \\[2pt]
A^{3}_N& A^{3}_{N+1} & A^{3}_{N+2}
\end{matrix}
\right], &
 B_N &\coloneqq
\left[
\begin{matrix}
B^{1}_N & B^{2}_N \\[2pt]
B^{1}_{N+1} & B^{2}_{N+1}
\end{matrix}
\right],& N&\in \mathbb N ,
\end{align*}
and the following products, defining for all $N \in \mathbb Z_+$,
\begin{align*}
\alpha_N &\coloneqq T_{3,0} \cdots T_{N+2,N-1}, &
\beta_N &\coloneqq (-1)^{N}T_{0,2}\cdots T_{N-1,N+1} ,
\end{align*}
with $\alpha_0=\beta_0=1$.
In fact, in~\cite{espectral}, it was proved that
the characteristic polynomials and determinants of left and right recursion polynomial blocks~satisfy
\begin{align*}
P_N(x)&=\alpha_N \det A_N(x)=\beta_N \det B_N(x), && N \in \mathbb N .
\end{align*}
This fact leads us directly to the right and left eigenvectors of $T^{[N]}$.
Let us introduce the determinantal polynomials
\begin{align}\label{eq:QNn}
Q_{n,N}&\coloneqq
\left|
\begin{matrix}
A^{1}_{n} & A^{2}_{n} & A^{3}_{n} \\[2pt]
A^{1}_{N+1} & A^{2}_{N+1} & A^{3}_{N+1} \\[2pt]
A^{1}_{N+2} & A^{2}_{N+2} & A^{3}_{N+2}
\end{matrix}
\right|,&
R_{n,N}&\coloneqq
\left|\begin{matrix}
B^{1}_{n} & B^{2}_{n} \\[2pt]
B^{1}_{N+1} & B^{2}_{N+1}
\end{matrix}
\right|,
&& n, N \in \mathbb N ,
\end{align}
and the semi-infinite row and column vectors
\begin{align*}
Q_N&\coloneqq
\left[\begin{matrix}
Q_{0,N} &Q_{1,N} &\cdots
\end{matrix}\right], &
R_N&\coloneqq
\left[\begin{matrix}
R_{0,N} & R_{1,N} & \cdots
\end{matrix}\right]^\top .
\end{align*}
With the corresponding truncations
\begin{align*}
Q^{\langle N\rangle}&\coloneqq
 \left[\begin{matrix}
Q_{0,N} &Q_{1,N}&\cdots & Q_{N,N}
\end{matrix} \right], 
& R^{\langle N\rangle}&
\coloneqq
\left[\begin{matrix}
R_{0,N} & R_{1,N} & \cdots & R_{N,N}
\end{matrix}\right]^\top
 ,
\end{align*}
we define the left and right eigenvectors for $T^{[N]}$. In fact, it can be proven that (cf.~\cite{espectral}), 
\begin{align*}
Q^{\langle N\rangle}\big|_{x=\lambda^{[N]}_k}
&& \text{and} &&
R^{\langle N\rangle}\big|_{x=\lambda^{[N]}_k},
&& k\in\{1,\dots,N+1\},
\end{align*}
are left and right eigenvectors of $T^{[N]}$, respectively.


We present now a generalized Christoffel--Darboux formula for the determinantal polynomials and the characteristic polynomial of a banded matrix (cf.~\cite{espectral}).
For the determinantal polynomials $Q_{n,N}$ and $R_{n,N}$ introduced in~\eqref{eq:QNn} we get the following generalized Christoffel--Darboux formula
\begin{align*}
\sum_{n=0}^{N}Q_{n,N}(x)R_{n,N}(y)= \frac{1}{\alpha_N \beta_N }\frac{P_{N+1}(x)P_{N}(y)-P_{N}(x)P_{N+1}(y)}{x-y} ,
 && N \in \mathbb N ,
\end{align*}
and its confluent Christoffel--Darboux relation, for all $ N \in \mathbb N$,
\begin{align*}
\sum_{n=0}^{N}Q_{n,N} (x) R_{n,N} (x) = \frac{1}{\alpha_N \beta_N }\big(P^\prime_{N+1} (x) P_{N} (x)-P^\prime_{N} (x) P_{N+1} (x)\big) .
\end{align*}
We end this subsection with some important interlacing result (cf.~\cite{espectral}).

\begin{thm}[Interlacing]\label{pro:interlacing}
Let us assume that $T$ is oscillatory. Then:
\begin{enumerate}[\rm (1)]
\item The polynomial $P_{N+1}$ interlaces $P_N$.
\item For $x\in\mathbb R$, for the corresponding Wronskian we find $P_{N+1}^\prime P_N-P_N^\prime P_{N+1}>0$. 
In~particular,
\begin{align*}
(P_{N+1}^\prime P_N)\big|_{x=\lambda^{[N]}_k}&>0, & (P_{N+1}P_N^\prime )\big|_{x=\lambda^{[N-1]}_k}&<0.
\end{align*}
\item The confluent kernel is a positive function; i.e.,
\begin{align*}
\alpha_N \beta_N \sum_{n=0}^{N}Q_{n,N}(x)R_{n,N}(x)>0 && \text{for} && x\in\mathbb R .
\end{align*}
\end{enumerate}
\end{thm}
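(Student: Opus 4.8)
The plan is to derive the three items in the order (1) $\Rightarrow$ (2) $\Rightarrow$ (3): item~(1) is a spectral fact about oscillatory matrices, item~(2) is a purely polynomial consequence of the interlacing in~(1), and item~(3) is then immediate from the confluent Christoffel--Darboux formula recorded just above the statement.

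For~(1) I would argue as follows. Because $T$ is oscillatory, each leading principal truncation $T^{[N]}$ is again oscillatory, so by the Gantmacher--Krein theory (cf.~\cite{Gantmacher-Krein,Fallat-Johnson}) its spectrum consists of $N+1$ positive and \emph{simple} eigenvalues; in particular $P_{N+1}$ has $N+1$ distinct real zeros. Since $T^{[N-1]}$ is exactly the leading principal submatrix of $T^{[N]}$ of order one smaller, the separation theorem for oscillatory matrices guarantees that the eigenvalues of $T^{[N-1]}$ strictly interlace those of $T^{[N]}$, which is precisely the strict interlacing of the zeros of $P_N$ and $P_{N+1}$ asserted in~(1).

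Granting~(1), item~(2) is a standard real-rootedness computation. Both $P_{N+1}$ and $P_N$ are monic, of degrees $N+1$ and $N$, with real simple zeros that strictly interlace and share no common zero; hence the Wronskian $W(x)\coloneqq P_{N+1}'(x)P_N(x)-P_N'(x)P_{N+1}(x)$ cannot vanish on $\mathbb R$, because at a zero of $P_{N+1}$ it equals $P_{N+1}'P_N$ and at a zero of $P_N$ it equals $-P_N'P_{N+1}$, each a product of two nonvanishing factors. Thus $W$ has constant sign, and since its leading term is $x^{2N}$ one gets $W(x)>0$ for all real $x$. Evaluating $W$ at $x=\lambda^{[N]}_k$ and at $x=\lambda^{[N-1]}_k$ yields the two ``in particular'' inequalities. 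Finally, the confluent Christoffel--Darboux formula gives $\alpha_N\beta_N\sum_{n=0}^{N}Q_{n,N}(x)R_{n,N}(x)=W(x)>0$, which is~(3).

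The hard part will be step~(1). Since $T$ is \emph{not} symmetric, Cauchy's interlacing theorem is unavailable, and the oscillatory character must play the role of self-adjointness; concretely, I expect the delicate points to be verifying that the truncations $T^{[N]}$ inherit the oscillatory property from the positive bidiagonal factorization and invoking the correct Gantmacher--Krein separation statement for non-symmetric oscillatory matrices. Once strict interlacing is secured, steps~(2) and~(3) are routine.
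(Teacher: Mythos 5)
Your overall route is the one the paper intends: the theorem is stated here only with a pointer to \cite{espectral}, but the scheme you follow --- leading truncations of an oscillatory matrix are again oscillatory, Gantmacher--Krein gives simple positive eigenvalues together with strict interlacing against the characteristic polynomial of the submatrix obtained by deleting the last row and column, and the confluent Christoffel--Darboux identity then turns the sign of the Wronskian into item (3) --- is exactly the strategy the authors sketch for the Jacobi case in \S 2 and carry out in the companion paper. Items (1) and (3) are fine as you argue them.

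The one genuine gap is in your justification of item (2). You claim that $W=P_{N+1}'P_N-P_N'P_{N+1}$ ``cannot vanish on $\mathbb R$'' because it is nonzero at every zero of $P_{N+1}$ and at every zero of $P_N$; that only rules out vanishing at those finitely many points and says nothing about the rest of the real line, so the ensuing ``constant sign'' conclusion does not follow as written. The statement is true, but it needs the classical Hermite--Kakeya-type argument: writing $P_N/P_{N+1}=\sum_k c_k/\big(x-\lambda^{[N]}_k\big)$ with $c_k=P_N\big(\lambda^{[N]}_k\big)/P_{N+1}'\big(\lambda^{[N]}_k\big)$, strict interlacing forces all the residues $c_k$ to have the same sign, whence $(P_N/P_{N+1})'=-W/P_{N+1}^2=-\sum_k c_k/\big(x-\lambda^{[N]}_k\big)^2$ has constant sign off the poles, so $W$ never vanishes there either; combined with your leading-term computation ($W\sim x^{2N}$ with coefficient $(N+1)-N=1>0$) this yields $W>0$ on all of $\mathbb R$. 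With that repair, the two ``in particular'' inequalities and item (3) follow exactly as you say.
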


\subsection{Biorthogonality and Christoffel numbers}

We now discuss, for the truncated situation, how to construct biorthogonal families of left and right eigenvectors and introduce the Christoffel numbers in this setting.
Definig the Christoffel numbers~as
%
%
\begin{align*}
\mu_{k,1}^{[N]} & \coloneqq
\frac{\alpha_N
\left|\begin{smallmatrix}
A^{2}_{N+1} (\lambda^{[N]}_k) & A^{3}_{N+1}(\lambda^{[N]}_k) \\
A^{2}_{N+2} (\lambda^{[N]}_k) & A^{3}_{N+2}(\lambda^{[N]}_k) 
\end{smallmatrix}
\right|}{P^\prime_{N+1} (\lambda^{[N]}_k) P_{N} (\lambda^{[N]}_k)},
 &&
\mu^{[N]}_{k,2} \coloneqq
 - \frac{\alpha_N\left|\begin{smallmatrix}
A^{1}_{N+1} (\lambda^{[N]}_k )& A^{3}_{N+1} (\lambda^{[N]}_k ) \\[2pt]
A^{1}_{N+2} (\lambda^{[N]}_k )& A^{3}_{N+2} (\lambda^{[N]}_k )
\end{smallmatrix}
\right|}{P^\prime_{N+1} (\lambda^{[N]}_k) P_{N} (\lambda^{[N]}_k)},
 \\
\mu^{[N]}_{k,3} & \coloneqq
\frac{\alpha_N\left|\begin{smallmatrix}
A^{1}_{N+1} (\lambda^{[N]}_k ) & A^{2}_{N+1} (\lambda^{[N]}_k ) \\[2pt]
A^{1}_{N+2} (\lambda^{[N]}_k ) & A^{2}_{N+2} (\lambda^{[N]}_k )
\end{smallmatrix}
\right|}{P^\prime_{N+1} (\lambda^{[N]}_k) P_{N} (\lambda^{[N]}_k)},
 \\
\rho_{k,1}^{[N]}&\coloneqq \beta_N
B^{2}_{N+1} (\lambda^{[N]}_k ) , &&
\rho^{[N]}_{k,2} \coloneqq
 -\beta_N
B^{1}_{N+1} (\lambda^{[N]}_k )
 ,
\end{align*}
we get the following orthogonality relations.

\begin{thm}[Spectral properties]\label{pro:UW}
Assume that $P_{N+1}$ has simple zeros at the set $\big\{\lambda^{[N]}_k, k =1 , \ldots , N+ 1 \big\}$,
then
\begin{enumerate}[\rm (I)]

\item The following expression holds
\begin{align*}
w^{\langle N\rangle}_{k,n}&=
\frac{ \alpha_N Q_{n-1,N} (\lambda^{[N]}_k)
}{
P_{N}(\lambda^{[N]}_k)P^\prime_{N+1}(\lambda^{[N]}_k)}, &
 u^{\langle N\rangle}_{k,n}&=\beta_N R_{n-1,N} (\lambda^{[N]}_k ).
\end{align*}
\item For the Christoffel numbers it holds that
\begin{align}
\label{eq:kcomponentelefteigenII}
\left[
\begin{matrix}
\mu^{[N]}_{k,1} \\[.1cm]
\mu^{[N]}_{k,2} \\[.1cm]
\mu^{[N]}_{k,3}
\end{matrix}
\right]
&= \left[
\begin{matrix}
1& 0 & 0 \\[.1cm]
\nu^{1}_1 & 1 & 0 \\[.1cm]
\nu^{1}_{2} & \nu^{2}_{2} & 1 
\end{matrix}
\right]^{-1}
\left[
\begin{matrix}
w^{\langle N\rangle}_{k,1} \\[.1cm]
w^{\langle N\rangle}_{k,2} \\[.1cm]
w^{\langle N\rangle}_{k,3}
\end{matrix}
 \right], &
\left[
\begin{matrix}
\rho^{[N]}_{k,1} \\[.1cm]
\rho^{[N]}_{k,2}
\end{matrix}
\right]
&= \left[
\begin{matrix}
1& 0 \\[.1cm]
\xi^{1}_1 & 1
\end{matrix}
\right]^{-1}
\left[
\begin{matrix}
u^{\langle N\rangle}_{k,1} \\[.1cm]
u^{\langle N\rangle}_{k,2}
\end{matrix}
 \right]
 .
\end{align}
 
\item The corresponding matrices $\mathscr U$ (with columns the right eigenvectors $u_k$ arranged in the standard order) and 
$\mathscr W$ (with rows the left eigenvectors~$w_k$ arranged in the standard order) satisfy
\begin{align}\label{eq:UW=I}
\mathscr U\mathscr W=\mathscr W\mathscr U=I_{N+1}.
\end{align}
\item In terms of the eigenvalues diagonal matrix $D=\operatorname{diag}
\left[\lambda^{[N]}_1,\ldots,\lambda^{[N]}_{N+1}\right]$ we have
\begin{align}\label{eq:UDnW=Jn}
\mathscr UD^n\mathscr W&=\big(T^{[N]}\big)^n, & n& \in\mathbb N .
\end{align}
\end{enumerate}
\end{thm}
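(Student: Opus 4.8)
The plan is to establish the four items in the order (I)$\to$(III)$\to$(II)$\to$(IV), since the normalization in (III) is what justifies the explicit formulas asserted in (I), item (II) is a determinantal bookkeeping consequence of (I), and (IV) is immediate once biorthogonality is in hand. Throughout I would use the already established fact that, for each simple zero $\lambda^{[N]}_k$ of $P_{N+1}$, the truncated vectors $Q^{\langle N\rangle}\big|_{x=\lambda^{[N]}_k}$ and $R^{\langle N\rangle}\big|_{x=\lambda^{[N]}_k}$ are, respectively, left and right eigenvectors of $T^{[N]}$ for $\lambda^{[N]}_k$, together with the generalized Christoffel--Darboux formula and its confluent form recalled just before Theorem~\ref{pro:interlacing}.

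For item~(I), I would fix the right eigenvectors by the normalization $u^{\langle N\rangle}_{k,n}=\beta_N R_{n-1,N}(\lambda^{[N]}_k)$ and write the left eigenvector as $w^{\langle N\rangle}_k=c_k\,Q^{\langle N\rangle}(\lambda^{[N]}_k)$, determining $c_k$ by demanding $w^{\langle N\rangle}_k\cdot u^{\langle N\rangle}_k=1$. To compute this diagonal pairing I would evaluate the confluent Christoffel--Darboux formula at $x=\lambda^{[N]}_k$; since $P_{N+1}(\lambda^{[N]}_k)=0$, its right-hand side collapses to $\tfrac{1}{\alpha_N\beta_N}P^\prime_{N+1}(\lambda^{[N]}_k)P_{N}(\lambda^{[N]}_k)$, which is nonzero by the interlacing Theorem~\ref{pro:interlacing}. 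This forces $c_k=\alpha_N\big/\big(P_{N}(\lambda^{[N]}_k)P^\prime_{N+1}(\lambda^{[N]}_k)\big)$ and produces exactly the stated expression for $w^{\langle N\rangle}_{k,n}$. For item~(III) I would then compute the $(j,k)$ entry of $\mathscr W\mathscr U$, namely $\tfrac{\alpha_N\beta_N}{P_{N}(\lambda^{[N]}_j)P^\prime_{N+1}(\lambda^{[N]}_j)}\sum_{n=0}^{N}Q_{n,N}(\lambda^{[N]}_j)R_{n,N}(\lambda^{[N]}_k)$: for $j=k$ the confluent formula again returns $1$, while for $j\neq k$ the non-confluent formula returns $0$ because $P_{N+1}$ vanishes at both $\lambda^{[N]}_j$ and $\lambda^{[N]}_k$. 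Hence $\mathscr W\mathscr U=I_{N+1}$, and since both matrices are square, $\mathscr U\mathscr W=I_{N+1}$ as well.

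For item~(II) I would expand the determinantal polynomials $Q_{n,N}$ and $R_{n,N}$ along their first rows. The initial conditions make the rows $(A^{1}_0,A^{2}_0,A^{3}_0)$, $(A^{1}_1,A^{2}_1,A^{3}_1)$, $(A^{1}_2,A^{2}_2,A^{3}_2)$ coincide with the rows of $\nu$; expanding $Q_{0,N},Q_{1,N},Q_{2,N}$ along the first row and matching the resulting $2\times2$ minors against the definitions of $\mu^{[N]}_{k,1},\mu^{[N]}_{k,2},\mu^{[N]}_{k,3}$ gives $\big(w^{\langle N\rangle}_{k,1},w^{\langle N\rangle}_{k,2},w^{\langle N\rangle}_{k,3}\big)^\top=\nu\,\big(\mu^{[N]}_{k,1},\mu^{[N]}_{k,2},\mu^{[N]}_{k,3}\big)^\top$, using the formula from~(I). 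The identical expansion of $R_{0,N},R_{1,N}$ against the rows of $\xi$ yields $\big(u^{\langle N\rangle}_{k,1},u^{\langle N\rangle}_{k,2}\big)^\top=\xi\,\big(\rho^{[N]}_{k,1},\rho^{[N]}_{k,2}\big)^\top$. Inverting $\nu$ and $\xi$ gives the displayed identities.

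Finally, item~(IV) is immediate from~(III): biorthogonality says $\mathscr W=\mathscr U^{-1}$, and the eigenvector property in matrix form reads $T^{[N]}\mathscr U=\mathscr U D$, so $T^{[N]}=\mathscr U D\mathscr W$ and therefore $\big(T^{[N]}\big)^n=\mathscr U D^n\mathscr W$, the diagonalizability being guaranteed by the simplicity of the zeros of $P_{N+1}$. I expect the main obstacle to be the constant-tracking in items~(I)/(III): keeping $\alpha_N$ and $\beta_N$ correctly placed and, crucially, verifying that the diagonal pairing $\tfrac{1}{\alpha_N\beta_N}P^\prime_{N+1}(\lambda^{[N]}_k)P_{N}(\lambda^{[N]}_k)$ does not vanish. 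This is precisely where the oscillatory hypothesis enters, through Theorem~\ref{pro:interlacing}, which supplies both the non-vanishing of $P_N(\lambda^{[N]}_k)$ (strict interlacing) and the sign of the Wronskian.
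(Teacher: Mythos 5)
Your proposal is correct, and I cannot fault the route: the paper itself states this theorem without proof (deferring to \cite{espectral}), and your argument is built precisely from the ingredients the paper assembles for this purpose --- the identification of $Q^{\langle N\rangle}\big|_{x=\lambda^{[N]}_k}$ and $R^{\langle N\rangle}\big|_{x=\lambda^{[N]}_k}$ as left and right eigenvectors, the generalized and confluent Christoffel--Darboux formulas (which give the off-diagonal vanishing and the diagonal value $\tfrac{1}{\alpha_N\beta_N}P'_{N+1}(\lambda^{[N]}_k)P_N(\lambda^{[N]}_k)$ of the pairing), and the first-row expansions of the determinantal polynomials against the initial-condition matrices $\nu$ and $\xi$ for item (II); your constant-tracking checks out. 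The only remark worth recording is that the non-vanishing of $P_N(\lambda^{[N]}_k)$ need not be imported from the oscillatory hypothesis via Theorem~\ref{pro:interlacing}: simplicity of the zeros of $P_{N+1}$ already forces $w_k\cdot u_k\neq 0$ (a diagonalizable matrix cannot have a left eigenvector annihilating the right eigenvector of the same simple eigenvalue), which through the confluent formula yields $P_N(\lambda^{[N]}_k)P'_{N+1}(\lambda^{[N]}_k)\neq 0$ under the stated hypothesis alone; invoking interlacing is of course also valid in the oscillatory setting where the theorem is applied.
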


\section{Spectral Theorem} \label{sec:4}

As in the scalar case we will reformulate the previous biorthogonality relations in terms of a set of discrete measures and corresponding mixed multiple discrete orthogonality.
Let us consider the following step functions for $a \in \{ 1,2,3 \}$ and $b \in \{ 1,2\}$,
\begin{align}
\label{eq:conv_entrada}
\psi^{[N]}_{b,a}&
=
 \begin{cases}
0, & x<\lambda^{[N]}_{N+1},\\[2pt]
\rho^{[N]}_{1,b}\mu^{[N]}_{1,a}+\cdots+\rho^{[N]}_{k,b}\mu^{[N]}_{k,a},
&
 \lambda^{[N]}_{k+1}\leqslant x< \lambda^{[N]}_{k},
\quad k\in\{1,\dots,N\}, \\[2pt]
\rho^{[N]}_{1,b}\mu^{[N]}_{1,a}+\cdots+\rho^{[N]}_{N+1,b}\mu^{[N]}_{N+1,a} ,
& x \geqslant \lambda^{[N]}_{1}.
\end{cases}
\end{align}
It can be proven that for $a \in \{ 1,2,3 \}$ and $b \in \{ 1,2\}$,
\begin{align}\label{eq:bound}
\rho^{[N]}_{1,b}\mu^{[N]}_{1,a}+\cdots+\rho^{[N]}_{N+1,b}\mu^{[N]}_{N+1,a} = (\xi^{-1}I_{2,3}\nu^{-\top})_{b,a} ,
\end{align}
and so in the case of positive Christoffel coefficients these step functions are non-decreasing and uniformly bounded in~$N$.
Notice that these functions have bounded variation and are right continuous, so it makes sense to consider the  Lebesgue--Stielt\-jes associated measures. Let us introduce a $2\times 3$~matrix 
\begin{align*}
\Psi^{[N]}\coloneqq
\left[\begin{matrix}
\psi^{[N]}_{1,1}&\psi^{[N]}_{1,2} &\psi^{[N]}_{1,3}\\[.1cm]
\psi^{[N]}_{2,1}&\psi^{[N]}_{2,2} &\psi^{[N]}_{2,3}
\end{matrix}
\right]
 &&
 \text{and}
 &&
\d\Psi^{[N]}
=
\left[\begin{matrix}
\d\psi ^{[N]}_{1,1}&\d\psi ^{[N]}_{1,2} &\d\psi^{[N]}_{1,3}\\[.1cm]
\d\psi ^{[N]}_{2,1}&\d\psi ^{[N]}_{2,2} &\d\psi^{[N]}_{2,3}
\end{matrix}\right],
\end{align*}
the discrete Lebesgue--Stieltjes measures supported at the zeros of $P_{N+1}$, by
\begin{align*}
\d\Psi^{[N]}
=\sum_{k=1}^{N+1}
\left[\begin{matrix}
\rho^{[N]}_{k,1} \\[.1cm]
\rho^{[N]}_{k,2}
\end{matrix}\right]
\left[\begin{matrix}
\mu^{[N]}_{k,1} & \mu^{[N]}_{k,2} & \mu^{[N]}_{k,3}
\end{matrix}
\right]
\, \delta (x-\lambda^{[N]}_k ).
\end{align*}
Now, from~\eqref{eq:UW=I},
the following biorthogonal relations hold
\begin{align}\label{eq:N_biorthogonality}
\int \mathscr B_n (x) \d \psi (x) \mathscr A_m^\top (x) = \delta_{n,m}, && n , m \in \{0,1, \ldots, N \} .
\end{align}
Defining the second kind characteristic polynomial matrix as
\begin{align*}
P^{(1)}_{N+1} (z)
& \coloneqq \xi^{-1} E_{[2]}\operatorname{adj}(z I_{N+1}-T^{[N]}) E_{[3]}^\top\nu^{-\top} \\
 & =
\sum_{k=1}^{N+1} 
\begin{bNiceMatrix}
\rho^{[N]}_{k,1} \\[.1cm] \rho^{[N]}_{k,2}
\end{bNiceMatrix}\begin{bNiceMatrix}
\mu^{[N]}_{k,1} &\mu^{[N]}_{k,2} & \mu^{[N]}_{k,3}
\end{bNiceMatrix}
\prod_{\mathclap{\substack{l\in\{1,\dots,N+1\}\\l\neq k}}}\big(x-\lambda^{[N]}_l\big)
\\
&=\int \frac{P_{N+1}(z)-P_{N+1}(x)}{z-x}\d\Psi^{[N]}(x),
\end{align*}
we get a $2\times 3$ matrix of polynomials whose entries are the polynomials of the second kind:
$\big( P^{(1)}_{N+1} \big)_{b,a}=P^{(b,a)}_{N+1} $, 
$ a \in \{1,2,3\} $, 
$b \in \{ 1,2 \} $.
The resolvent matrix $ R^{[N]}(z)$ of the leading principal submatrix $ T^{[N]}$ is
\begin{align*}
R^{[N]}(z)\coloneqq \big(z I_{N+1}- T^{[N]}\big)^{-1}=\frac{\operatorname{adj}\big(z I_{N+1}- T^{[N]}\big)}{\det(z I_{N+1}-T^{[N]})} ,
\end{align*}
and by the spectral decomposition of the matrix $ T^{[N]}$~\eqref{eq:UDnW=Jn} we get
\begin{align*}
R^{[N]}(z) & =\mathscr U(zI_{N+1}-D)^{-1}\mathscr W. 
\end{align*}
For the $2\times 3$ matrix of Weyl functions
\begin{align}
\label{eq:convweyl}
S^{[N]}=
\left[\begin{matrix}
S^{[N]}_{1,1} &S^{[N]}_{1,2}& S^{[N]}_{1,3} \\[.1cm]
S^{[N]}_{2,1} &S^{[N]}_{2,2}& S^{[N]}_{2,3} 
\end{matrix}
\right]
\coloneqq \xi^{-1}E_{[2]}R^{[N]} (z)E^\top_{[3]}\nu^{-1} ,
\end{align}
we can write
\begin{align*}
S^{[N]}(z)&= \frac{P^{(1)}_{N+1}(z)}{P_{N+1}(z)}
=\sum_{k=1}^{N+1}\frac{1}{z-\lambda^{[N]}_k}\begin{bNiceMatrix}
\rho^{[N]}_{k,1} \\[.1cm]\rho^{[N]}_{k,2}
\end{bNiceMatrix}\begin{bNiceMatrix}
\mu^{[N]}_{k,1} &\mu^{[N]}_{k,2} & \mu^{[N]}_{k,3}
\end{bNiceMatrix}
  = \int\frac{\d\Psi^{[N]}(x)}{z-x}.
\end{align*}
We say that a banded matrix $ T$ as in \eqref{eq:monic_Hessenberg} admits a positive bidiagonal factorization~(PBF)~if
\begin{align*}
T= L_{1} L_2 L_{3} \Delta U_2 U_1,
\end{align*}
with $\Delta=\operatorname{diag} \left[\Delta_0,\Delta_1,\ldots \right]$ and bidiagonal matrices given respectively by 
\begin{align*}
L_k&\coloneqq \left[\begin{NiceMatrix}[columns-width=auto]
1 &0&\Cdots&\\
 L_{k|0} & 1 &\Ddots&\\
0& L_{k|1}& 1& \\
\Vdots&\Ddots& \Ddots& \Ddots\\&&&
\end{NiceMatrix}\right], & 
U_j& \coloneqq
\left[\begin{NiceMatrix}[columns-width = auto]
1& U_{j|0}&0&\Cdots&\\
0& 1& U_{j|1}&\Ddots&\\
\Vdots&\Ddots&1&\Ddots&\\
& &\Ddots &\Ddots &\\&&&&
\end{NiceMatrix}\right], 
\end{align*}
and such that the positivity constraints $L_{k|i}, U_{j|i}, \Delta_i>0,$ for $i\in\mathbb N$, $k \in \{ 1,2,3 \}$, $j \in \{ 1,2 \}$, are satisfied. 
Now, we consider the Darboux transformations of our truncations,~$T^{[N]}$,~i.e.
\begin{align}\label{eq:Darboux+}
\begin{cases}
\hat T^{[N,-1]}=U_1^{[N]}L_1 ^{[N]} L_2 ^{[N]} L_3 ^{[N]} \Delta^{[N]} U_2^{[N]} , \\
\hat T^{[N,-2]}=U_2^{[N]}U_1^{[N]}L_1^{[N]} L_2^{[N]} L_3 ^{[N]} \Delta^{[N]} , \\
\hat T^{[N,+1]}=L_2 ^{[N]} L_3 ^{[N]} \Delta^{[N]} U_2^{[N]} U_1^{[N]} L_1^{[N]}, \\
\hat T^{[N,+2]}=L_3 ^{[N]} \Delta^{[N]} U_2^{[N]} U_1^{[N]}L_1^{[N]}L_2 ^{[N]}, \\
\hat T^{[N,+3]}= \Delta^{[N]} U_2^{[N]} U_1^{[N]}L_1^{[N]}L_2 ^{[N]} L_{3} ^{[N]} .
\end{cases}
\end{align}
We can prove the following fundamental result.

\begin{thm}[cf.~\cite{espectral}]
\label{lemma:poly}
Let us assume that $T$ has a PBF.
Then, for $k\in\{1,2,3\}$:
\begin{enumerate}[\rm (1)]
\item The Darboux transformations $\hat{ T}^{[N,+a]}$, $a\in\{1,2,3\}$, $\hat{ T}^{[N,-b]}$, $b\in\{1,2\}$ are oscillatory.
\item The characteristic polynomial of the Darboux transformations 
$\hat{ T}^{[N,+a]}$, $a\in\{1,2, 3\}$, $\hat{ T}^{[N,-b]}$, $b\in\{1,2\}$ is $P_{N+1}$. 
\item If $w,u$ are left and right eigenvectors of $ T^{[N]}$, respectively, then
$\hat w_1=w L_{1}^{[N]} L_{2}^{[N]}$,
$\hat w_2=w L_{1}^{[N]} $, 
$\hat w_3=w L_{1}^{[N]} L_{2}^{[N]} L_{3}^{[N]}$, 
are left eigenvectors of $\hat{ T}^{[N,+2]}$, $\hat{ T}^{[N,+1]}$, $\hat{ T}^{[N,+3]}$, respectively
 and
$\hat u_1=U_{2}^{[N]} U_{1}^{[N]}u$,
$\hat u_2= U_{1}^{[N]}u$
are right eigenvector of $\hat{ T}^{[N,-2]}$ and $\hat{ T}^{[N,-1]}$, respectively.
\end{enumerate}
\end{thm}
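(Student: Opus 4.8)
The plan is to handle all five Darboux transformations at once by recognizing each as a cyclic permutation of the six factors in the truncated factorization
\begin{align*}
T^{[N]} = L_1^{[N]} L_2^{[N]} L_3^{[N]} \Delta^{[N]} U_2^{[N]} U_1^{[N]} ,
\end{align*}
which is legitimate because $L_1 L_2 L_3$ is lower unitriangular, $\Delta$ is a positive diagonal, and $U_2 U_1$ is upper unitriangular, and taking leading principal submatrices commutes with such an LDU-type product. Splitting $T^{[N]}=AB$ at the appropriate place and reading off $BA$, each transformation is a genuine conjugation: $\hat T^{[N,+1]}=(L_1^{[N]})^{-1}T^{[N]}L_1^{[N]}$, $\hat T^{[N,+2]}=(L_1^{[N]}L_2^{[N]})^{-1}T^{[N]}(L_1^{[N]}L_2^{[N]})$ and $\hat T^{[N,+3]}=(L_1^{[N]}L_2^{[N]}L_3^{[N]})^{-1}T^{[N]}(L_1^{[N]}L_2^{[N]}L_3^{[N]})$, while $\hat T^{[N,-1]}=U_1^{[N]}T^{[N]}(U_1^{[N]})^{-1}$ and $\hat T^{[N,-2]}=U_2^{[N]}U_1^{[N]}T^{[N]}(U_2^{[N]}U_1^{[N]})^{-1}$. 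Since each $L_k^{[N]}$ and $U_j^{[N]}$ is unipotent and $\Delta^{[N]}$ is positive diagonal, all conjugating matrices are invertible, so every Darboux transformation is \emph{similar} to $T^{[N]}$.

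Part~(2) is then immediate: similar matrices have the same characteristic polynomial, and the characteristic polynomial of $T^{[N]}$ is $P_{N+1}$ by definition, whence so is that of each $\hat T^{[N,+a]}$ and $\hat T^{[N,-b]}$.

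Part~(3) follows from the transformation rule for eigenvectors under conjugation. If $\hat T=M^{-1}T^{[N]}M$ and $wT^{[N]}=\lambda w$, then $(wM)\hat T=\lambda(wM)$, so a left eigenvector is right-multiplied by $M$; taking $M=L_1^{[N]}$, $L_1^{[N]}L_2^{[N]}$, $L_1^{[N]}L_2^{[N]}L_3^{[N]}$ produces $\hat w_2$, $\hat w_1$, $\hat w_3$ as left eigenvectors of $\hat T^{[N,+1]}$, $\hat T^{[N,+2]}$, $\hat T^{[N,+3]}$. Dually, if $\hat T=MT^{[N]}M^{-1}$ and $T^{[N]}u=\lambda u$, then $\hat T(Mu)=\lambda(Mu)$, so a right eigenvector is left-multiplied by $M$; taking $M=U_1^{[N]}$ and $M=U_2^{[N]}U_1^{[N]}$ gives $\hat u_2$ and $\hat u_1$ as right eigenvectors of $\hat T^{[N,-1]}$ and $\hat T^{[N,-2]}$. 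This also explains why only left eigenvectors appear for the $+$ transformations and only right eigenvectors for the $-$ ones: the $L$-factors act on the right of $w$ and the $U$-factors on the left of $u$.

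The substantive part is~(1), where similarity is useless since the oscillatory property is not a conjugacy invariant; here I would argue directly from the factored form. Each Darboux transformation is a product of the \emph{same} elementary positive bidiagonal matrices $L_1^{[N]},L_2^{[N]},L_3^{[N]},\Delta^{[N]},U_2^{[N]},U_1^{[N]}$, merely cyclically reordered. Every such elementary factor is totally nonnegative, and products of totally nonnegative matrices are totally nonnegative (Cauchy--Binet), so each transformation is totally nonnegative; it is nonsingular, being a product of nonsingular factors. To upgrade ``nonsingular and totally nonnegative'' to ``oscillatory'' I would invoke the Gantmacher--Krein criterion (cf.~\cite{Gantmacher-Krein,Fallat-Johnson}): a nonsingular totally nonnegative matrix is oscillatory exactly when all its first sub- and super-diagonal entries are strictly positive. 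The crux, and the step I expect to be most delicate, is checking that this first-off-diagonal positivity survives each cyclic reordering; since every arrangement still contains all three lower factors and both upper factors, one verifies that the reordered product retains strictly positive entries immediately below and above the diagonal, which delivers the oscillatory conclusion.
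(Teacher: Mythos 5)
Your proposal is correct, and it is essentially the argument one would expect: the paper itself gives no proof of this theorem, deferring entirely to~\cite{espectral}, and the cyclic-conjugation reading of the Darboux transformations together with the Gantmacher--Krein criterion is precisely the strategy that reference follows. Your two structural observations are both valid: truncation does commute with the factorization, since $(AB)^{[N]}=A^{[N]}B^{[N]}$ whenever $A$ is lower triangular or $B$ is upper triangular, so $T^{[N]}=L_1^{[N]}L_2^{[N]}L_3^{[N]}\Delta^{[N]}U_2^{[N]}U_1^{[N]}$; and each cyclic permutation $AB\mapsto BA$ is the similarity $A^{-1}(AB)A$, which immediately yields parts~(2) and~(3), with the conjugating matrices matching the statement exactly. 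The only place you stop short is the step you yourself flag as delicate in part~(1): the positivity of the first sub- and superdiagonal entries of each reordered product. This does deserve to be carried out, but it is a one-line argument: every entry of every factor is nonnegative, so $(\hat T)_{i+1,i}$ and $(\hat T)_{i,i+1}$ are each bounded below by a single nonnegative path through the factors, and since every cyclic arrangement still contains some $L_k^{[N]}$ (with $L_{k|i}>0$), some $U_j^{[N]}$ (with $U_{j|i}>0$), and $\Delta^{[N]}$ (with $\Delta_i>0$), one may choose the path that takes its single off-diagonal step in one such factor and stays on the (positive) diagonals of all the others; this gives a strictly positive lower bound. With that line added, your proof is complete and self-contained, which is more than the present paper provides.
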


Now, for the proof of the positivity of the Christoffel numbers it is important to recall the representation of the Christoffel numbers~\eqref{eq:kcomponentelefteigenII}, apply the Theorem~\ref{pro:interlacing} and use the oscillatory character of $\hat{ T}^{[N,j]}$, $j \in \{ -2,-1,1,2,3 \}$ (cf.~\cite{espectral}).

\begin{thm}[Normal convergence of Weyl functions]
If the banded matrix~$T$ is bounded and there exist $s\geqslant 0$ such that $T+sI$ has a PBF, then the Weyl functions~\eqref{eq:convweyl} converge uniformly in compact subsets of $\bar{\mathbb C}\setminus \Delta$ to the Stieltjes--Markov functions, i.e.,
\begin{align*}
\lim_{N\to\infty}
S^{[N]}
 (z)
=
\lim_{N\to\infty}
 \frac{P^{(1)}
 _{N+1}(z)}{P_{N+1}(z)}
 =
 \hat \psi
 (z) && \text{uniformly on} && \bar{\mathbb C}\setminus \Delta.
\end{align*}
\end{thm}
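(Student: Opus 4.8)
The plan is to reproduce the scalar argument of~\S\ref{sec:2}, with the positive masses replaced by the positive Christoffel numbers $\rho^{[N]}_{k,b},\mu^{[N]}_{k,a}$ and the scalar Stieltjes function replaced by the $2\times 3$ matrix of Weyl functions~\eqref{eq:convweyl}. Under the hypothesis that $T+sI$ has a PBF, Theorem~\ref{lemma:poly} makes the relevant Darboux transformations oscillatory, and the Wronskian positivity of Theorem~\ref{pro:interlacing} then forces all the Christoffel numbers to be positive. Hence each entry $\psi^{[N]}_{b,a}$ of~\eqref{eq:conv_entrada} is a non-decreasing step function whose total variation equals, by~\eqref{eq:bound}, the $N$-independent number $(\xi^{-1}I_{2,3}\nu^{-\top})_{b,a}$. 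The six scalar sequences $\big\{\psi^{[N]}_{b,a}\big\}_{N\in\mathbb N}$ are therefore non-decreasing and uniformly bounded, so Helly's selection principle extracts a subsequence along which $\Psi^{[N]}$ converges weakly to a matrix $\widetilde\Psi$ of non-decreasing functions supported on $\Delta$.

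The next step identifies $\widetilde\Psi$ through its moments. Reading off the rank-one representation of $\d\Psi^{[N]}$ supported at the zeros of $P_{N+1}$, its $n$-th moment equals the corner block $\xi^{-1}E_{[2]}\big(T^{[N]}\big)^nE_{[3]}^\top\nu^{-\top}$, as follows from the spectral decomposition~\eqref{eq:UDnW=Jn} together with the representation of the Christoffel numbers in Theorem~\ref{pro:UW}. Since $T$ is banded, this corner stabilises: for $N$ large relative to $n$ it coincides with the corresponding block of $T^n$, whence
\[
\int_\Delta x^n\,\d\Psi^{[N]}(x)\longrightarrow \xi^{-1}E_{[2]}T^nE_{[3]}^\top\nu^{-\top}=\Psi_n,\qquad N\to\infty .
\]
All the measures live on the fixed compact $\Delta$, on which the matrix moment problem is determinate; hence $\widetilde\Psi$ has moments $\Psi_n$ and must coincide with the spectral measure $\Psi$. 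As the subsequence was arbitrary, the whole sequence $\Psi^{[N]}$ converges weakly to $\Psi$.

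Finally, fix $z\in\bar{\mathbb C}\setminus\Delta$; then $x\mapsto(z-x)^{-1}$ is continuous on $\Delta$, and Helly's second theorem gives
\[
S^{[N]}(z)=\int_\Delta\frac{\d\Psi^{[N]}(x)}{z-x}\longrightarrow\int_\Delta\frac{\d\Psi(x)}{z-x}=\hat\psi(z).
\]
To upgrade this pointwise statement to convergence uniform on compacta I would invoke normality. On a compact $K\subset\bar{\mathbb C}\setminus\Delta$ one has $|z-\lambda^{[N]}_k|\geqslant\operatorname{dist}(K,\Delta)>0$ for every $k$, the poles $\lambda^{[N]}_k$ being real eigenvalues of the oscillatory sections confined to $\Delta$; thus each entry of $S^{[N]}$ is holomorphic off $\Delta$ and bounded on $K$ by the (finite, $N$-independent) total mass divided by $\operatorname{dist}(K,\Delta)$. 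By Montel's theorem, applied entrywise, $\{S^{[N]}\}$ is a normal family, and a locally bounded family with a single pointwise limit converges uniformly on compact subsets (Vitali). Since $S^{[N]}=P^{(1)}_{N+1}/P_{N+1}$, this establishes both stated limits.

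The crux is the moment step of the second paragraph: one must verify the identity between the $n$-th moment of $\d\Psi^{[N]}$ and the finite-section corner $\xi^{-1}E_{[2]}(T^{[N]})^nE_{[3]}^\top\nu^{-\top}$, which rests on~\eqref{eq:UDnW=Jn}, Theorem~\ref{pro:UW}, and the initial-condition matrices $\xi,\nu$, and then check that the banded structure makes these corners stabilise to those of $T^n$. Granting this, the positivity of the Christoffel numbers, Helly's two theorems, and the Montel--Vitali upgrade are all routine.
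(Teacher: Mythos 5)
Your proposal follows essentially the same route as the paper: positivity of the Christoffel numbers from the oscillatory/PBF structure via Theorems~\ref{pro:interlacing} and~\ref{lemma:poly}, then Helly's selection and convergence theorems applied to the uniformly bounded, nondecreasing step functions $\psi^{[N]}_{b,a}$ of~\eqref{eq:conv_entrada}--\eqref{eq:bound}, and passage to the limit in the Weyl functions. The paper's own text is only a sketch that stops at subsequential convergence and defers the rest to~\cite{espectral}; your moment-stabilization argument (using~\eqref{eq:UDnW=Jn} and the bandedness of $T$ to pin the limit down uniquely, hence full-sequence convergence) and the Montel--Vitali upgrade to uniform convergence on compacta are the correct and natural completions of that sketch.
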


If $T+ s I$ has a PBF then its leading principal submatrices $T^{[N]} $ are oscillatory.
The shift in the matrix $T\to T+sI$ only shifts by $s$ the eigenvalues of the truncations $T^{[N]}$, so that they are positive, and the dependent variable of the recursion polynomials, but do not alter the interlacing properties of the polynomials and the positivity of the corresponding Christoffel numbers (cf.~for instance~\cite{espectral}). 
Now, the sequences $\big\{\psi_{a,b}^{[N]}\big\}_{N=0}^\infty$, $a\in\{1,2,3\}$, $b\in\{1,2\}$ given in~\eqref{eq:conv_entrada} are positive.
Moreover,~\eqref{eq:bound} implies that they are uniformly bounded and nondecreasing.
Consequently, following Helly's results, see~\cite[\S II]{Chihara} there exist subsequences that converge when $N\to\infty$ to positive nondecreasing functions $\psi_{b,a}$ with support on $\Delta$ and that the discrete biorthogonal relations lead to the stated biorthogonal properties.

\medskip

As a nice application of the work just presented we have the Gaussian quadrature formulas in the theory of  mixed-multiple orthogonal polynomials.

\begin{thm}
The following Gauss quadrature formulas hold
\begin{align*}
\int_\Delta x^n\d\psi_{b,a}(x)&=\sum_{k=1}^{N+1}\rho_{k,b}^{[N]}\mu_{k,a}^{[N]} ( \lambda_k^{[N]} )^n, & 0&\leqslant n\leqslant d_{b,a}(N) ,
\end{align*}
where $ a \in\{1,2,3\}$, $b\in\{1,2\}$ and
\begin{align*}
d_{b,a}(N)
\coloneqq
\left\lceil\frac{ N+2-a}{3}\right\rceil+\left\lceil\frac{ N+2-b}{2}\right\rceil-1 .
\end{align*}
\end{thm}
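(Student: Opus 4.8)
The plan is to establish the quadrature formula as a direct consequence of the discrete biorthogonality already developed, exploiting the exact degrees of the mixed multiple orthogonal polynomials. First I would observe that the measures $\d\psi_{b,a}^{[N]}$ are discrete measures supported at the eigenvalues $\lambda_k^{[N]}$ with masses $\rho_{k,b}^{[N]}\mu_{k,a}^{[N]}$, so that the right-hand side is precisely $\int x^n \d\psi_{b,a}^{[N]}(x)$. Thus the content of the theorem is that the discrete measure $\d\psi_{b,a}^{[N]}$ and the limiting measure $\d\psi_{b,a}$ share all moments up to order $d_{b,a}(N)$. The strategy is therefore to show that for $0\le n\le d_{b,a}(N)$ the monomial $x^n$ can be absorbed into a product $\mathscr B_{n'}(x)\,(\cdot)\,\mathscr A_{m'}(x)$ of the biorthogonal families whose indices are large enough that relation~\eqref{eq:N_biorthogonality} applies, yet small enough that the discrete relation is exact.

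The key step is a degree count. Recall $\deg B_n^b=\lceil\tfrac{n+2-b}{2}\rceil-1$ and $\deg A_n^a=\lceil\tfrac{n+2-a}{3}\rceil-1$. Fixing the component $(b,a)$, I would pick the largest index $m$ for which $B_m^b$ has degree at most $\lceil\tfrac{N+2-b}{2}\rceil-1$ and the largest index $l$ for which $A_l^a$ has degree at most $\lceil\tfrac{N+2-a}{3}\rceil-1$; these are the top admissible degrees before the recursion would exceed the truncation level $N$. Writing $x^n$ for $n\le d_{b,a}(N)=\lceil\tfrac{N+2-a}{3}\rceil+\lceil\tfrac{N+2-b}{2}\rceil-1$ as a linear combination involving these forms, the product then decomposes into terms $\int \mathscr B_r\,\d\Psi^{[N]}\,\mathscr A_s^\top$ with $r,s\le N$, each of which equals $\delta_{r,s}$ by~\eqref{eq:N_biorthogonality}. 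Because the discrete biorthogonality~\eqref{eq:N_biorthogonality} coincides with the continuous biorthogonality~\eqref{eq:biorthogonality} of the limiting measure exactly on this index range, the two sets of moments agree for all $n$ up to $d_{b,a}(N)$, which is the claim.

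The main obstacle I anticipate is bookkeeping the interplay between the scalar monomial $x^n$ and the vector-valued biorthogonal families, since the quadrature statement is scalar (indexed by a single pair $(b,a)$) whereas~\eqref{eq:N_biorthogonality} is a matrix identity. Concretely, one must verify that multiplying $x^n$ by the appropriate entries of $\mathscr A_m^\top$ and $\mathscr B_n$ produces exactly the $(b,a)$ entry of $\d\Psi^{[N]}$ paired against $x^n$, with no spillover into neighbouring components; this is where the ceiling functions in $d_{b,a}(N)$ enter decisively, as the threshold $\lceil\tfrac{N+2-a}{3}\rceil+\lceil\tfrac{N+2-b}{2}\rceil-1$ is exactly the sum of the two maximal admissible degrees minus one. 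I expect the verification that $n\le d_{b,a}(N)$ guarantees all invoked indices stay within $\{0,\dots,N\}$ to be the crux; once that is pinned down, the result follows by linearity and the exactness of the discrete biorthogonal relations, with the limit measure inheriting the shared moments through the Helly convergence already established.
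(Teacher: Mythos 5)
The paper itself states this theorem without proof (it is quoted from~\cite{espectral}), so I can only assess your argument on its own terms; as written it has two genuine gaps. The first and more serious one is the component--isolation problem that you yourself flag as ``the main obstacle'' but never resolve. The relation~\eqref{eq:N_biorthogonality} is a \emph{scalar} identity,
\begin{align*}
\sum_{b=1}^{2}\sum_{a=1}^{3}\int B_r^{b}(x)\,\d\psi^{[N]}_{b,a}(x)\,A_s^{a}(x)=\delta_{r,s},
\end{align*}
in which all six components of $\d\Psi^{[N]}$ are summed. No linear combination of such scalar quantities lets you ``absorb $x^n$'' and recover the single moment $\int x^n\,\d\psi_{b,a}$ for one fixed pair $(b,a)$: the products $B_r^{b}A_s^{a}$ for different $(b,a)$ are inseparably coupled inside each equation. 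The way this is actually resolved is through the block structure of the moment matrix~\eqref{eq:matriz_momentos} together with the uniqueness of the Gauss--Borel factorization~\eqref{eq:gaussborel}: since $\d\Psi$ and $\d\Psi^{[N]}$ admit the \emph{same} triangular factors up to index $N$, their block-Hankel moment matrices agree on the corresponding leading corner, and the $(b,a)$ entry of $\Psi_{j+k}$ sits at scalar position $(2j+b-1,\,3k+a-1)$ --- which is exactly where the two separate ceilings $\lceil\frac{N+2-b}{2}\rceil$ and $\lceil\frac{N+2-a}{3}\rceil$ come from. An equivalent route is to write both sides as $\big(\xi^{-1}E_{[2]}(T^{[N]})^{n}E_{[3]}^{\top}\nu^{-\top}\big)_{b,a}$ versus $\big(\xi^{-1}E_{[2]}T^{n}E_{[3]}^{\top}\nu^{-\top}\big)_{b,a}$ and compare the powers entrywise using the banded (genetic-sums) structure. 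Either way, an argument that works entry by entry in the $2\times3$ block is indispensable, and your proposal does not supply one.

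The second gap is in the degree bookkeeping. The maximal admissible degrees at level $N$ are $\deg B^{b}_{N}=\lceil\frac{N+2-b}{2}\rceil-1$ and $\deg A^{a}_{N}=\lceil\frac{N+2-a}{3}\rceil-1$, whose \emph{sum} is $d_{b,a}(N)-1$, not $d_{b,a}(N)$ (you describe the threshold as ``the sum of the two maximal admissible degrees minus one'', which is off by two). So even granting component isolation, products of admissible $B$'s and $A$'s only reach degree $d_{b,a}(N)-1$; the top moment $n=d_{b,a}(N)$ is not attainable from biorthogonality alone and requires in addition the type~I and type~II orthogonality relations (the rectangular extensions of the moment-matrix corner), exactly as in the scalar Gauss quadrature where $N+1$ nodes give exactness up to degree $2N+1$ rather than $2N$. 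Your reduction of the statement to ``the discrete and limiting measures share moments up to order $d_{b,a}(N)$'' is correct and is the right first step, but the two points above are the substance of the proof and are missing.
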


Here the degrees of precision $d_{b,a}$ are optimal.

%
%


\begin{thebibliography}{99}



\bibitem{Akhiezer_Glazman}
Naum I. Akhiezer and Israel M.Glazman,
\emph{Theory of linear operators in Hilbert space,}
2nd rev.ed., ``Nauka'', Moscow, 1966;
English transl. of 3rd ed., Pitman, Boston, MA, 1981. Reprint by Dover, 1993.










\bibitem{Aptekarev_Kaliaguine_VanIseghem}
Alexander Aptekarev, Valery Kaliaguine, and Jeannette Van Iseghem,
\emph{The Genetic Sums's Representation for the Moments of a System of Stieltjes Functions and its Application},
Constructive Approximation \textbf{16} (2000) 487--524.















\bibitem{PBF_1}
Amí­lcar Branquinho, Ana Foulquié-Moreno, Manuel Mañas,
\emph{Oscillatory banded Hessenberg matrices, multiple orthogonal polynomials and random walks}, \hyperref{https://arxiv.org/abs/2203.13578}{}{}{\texttt{arXiv:2203.13578}}, accepted for publication in Physica~Scipta.



\bibitem{espectral}
Amí­lcar Branquinho, Ana Foulquié-Moreno, Manuel Mañas,
\emph{Spectral theory for bounded banded matrices with positive bidiagonal factorization and mixed multiple orthogonal polynomials},
\hyperref{https://arxiv.org/pdf/2212.10235.pdf}{}{}{\texttt{arXiv:2212.10235}}.










\bibitem{Chihara}
Theodore S. Chihara,
\emph{An Introduction to Orthogonal Polynomials},
Gordon \& Breach, 1978, New York. Reprinted by Dover, 2011.








\bibitem{Fallat-Johnson}
Shaun M. Fallat and Charles R. Johnson,
\emph{Totally Nonnegative Matrices}, Princeton Series in Applied Mathematics, Princeton University Press, 2011, Princeton.





\bibitem{Gantmacher-Krein}
Felix P. Gantmacher and Mark G. Krein,
\emph{Oscillation and Kernels and Small Vibrations of Mechanical Systems},
revised second edition, AMS Chelsea Publishing, American Mathematical Society, Providence, Rhode Island.











\bibitem{Ismail}
Mourad E. H. Ismail,
\emph{Classical and Quantum Orthogonal Polynomails in One Variable},
Encyclopedia of Mathematics and its Applications \textbf{98}, Cambridge University Press,~2009.


\bibitem{Kalyagin}
Valery Kalyagin,
\emph{Hermite-Padé Approximants and Spectral Analysis of Nonsymmetric Operators}, Sbornik: Mathematics \textbf{82} (1995) 199--216.


\bibitem{Kaliaguine}
Valery Kaliaguine,
\emph{The operator moment problem, vector continued fractions and an explicit form of the Favard theorem for vector orthogonal polynomials},
Journal of Computational and Applied Mathematics \textbf{65} (1995) 181--193.














\bibitem{nikishin_sorokin}
Evgenii M. Nikishin and Vladimir N. Sorokin,
\emph{Rational Approximations and Orthogonality},
Translations of Mathematical Monographs, \textbf{92},
American Mathematical Society, Providence, 1991.


\bibitem{Simon}
Barry Simon,
\emph{Operator Theory. A Comprehensive Course in Analysis, Part 4},
American Mathematical Society, Providence, Rhode Island, 2015.





\bibitem{Sorokin_Van_Iseghem_1}
Vladimir N. Sorokin and Jeannette Van Iseghem, 
\emph{Algebraic Aspects of Matrix Orthogonality for Vector Polynomials},
Journal of Approximation Theory \textbf{90} (1997) 97--116.
 
 
 
\bibitem{Sorokin_Van_Iseghem_2}
Vladimir N. Sorokin and Jeannette Van Iseghem,
\emph{Matrix Continued Fractions},
Journal of Approximation Theory \textbf{96} (1999), 237--257.


\bibitem{Sorokin_Van_Iseghem_3}
Vladimir N. Sorokin and Jeannette Van Iseghem,
\emph{Matrix Hermite--Padé problem and dynamical systems},
Journal of Computational and Applied Mathematics \textbf{122 } (2000) 275--295.








\end{thebibliography}
\end{document}